\newtheorem{lem}{Lemma}
\newtheorem{tet}{Theorem}
\newtheorem{corol}{Corollary}
\newtheorem{prop}{Proposition}
\theoremstyle{remark}
\newtheorem{remark}{Remark}
\newtheorem{pelda}{Example}
\theoremstyle{definition}
\newtheorem{defi}{Definition}
\newcommand*{\uos}[2]{\ensuremath{ \{ #1 :#2 \}}}
\newcommand*{\os}[2]{\ensuremath{ ( #1 : #2 )}}
\newcommand*{\egy}{\mbox{\boldmath$1$}}
\newcommand*{\nulla}{\mbox{\boldmath$0$}}
\newcommand*{\dimz}{\mbox{dim}}
\newcommand*{\Span}{\mbox{Span}}
\newcommand*{\sop}{\kappa}
\newcommand*{\sopp}{\underline{\sop}}
\newcommand*{\mme}{\perp_{\cap}}
\begin{document}
\title{Conditional independence relations and log-linear models for random permutations}
\author{Vill\H o Csisz\'ar \thanks{Lor\'and E\"otv\"os University, Budapest,
e-mail: csvillo@gmail.com}}
\maketitle

\begin{abstract}
We propose a new class of models for random permutations, which we
call log-linear models, by the analogy with log-linear models used 
in the analysis of contingency tables. 
As a special case, we study the family of all Luce-de\-com\-po\-sable
distributions, and the family of those random permutations, for which
the distribution of both the permutation and its inverse is
Luce-de\-com\-po\-sable. We show that these latter models can be described
by conditional independence relations. We calculate the number of free parameters
in these models, and describe an iterative algorithm for maximum 
likelihood estimation, which
enables us to test if a set of data satisfies the conditional
independence relations or not.   

AMS 2000 Subject classifications: 62H05, 62E10, 60E05, 62-07.

Keywords: ranking models, random permutations,
log-linear models, conditional independence, de\-com\-po\-sability.
\end{abstract}

\section{Introduction}
There are three slightly different
situations, in which permutation-valued data may turn up.

$(i)$ The permutation describes a \emph{pairing}, i.e. a one to one
correspondance, between two sets of cardinality $n$, whose elements
are labelled with the numbers $1,\ldots, n$.
An example is a pairing of boys and girls for a dance. If the first
set is the boys and the second set is the girls, then a pairing
is given by the permutation $\pi$, where $\pi(i)=j$ means that boy
$i$ dances with girl $j$. 

$(ii)$ The permutation describes a \emph{ranking} of a labelled
set of cardinality $n$, i.e. the ranking from best to worst of
labelled alternatives. The ranking is given by $\pi$, where $\pi(i)=j$ means that alternative $i$ is ranked
$j$th best. The inverse of the ranking $\pi$ is
the \emph{ordering} $\pi^{-1}$, i.e. $\pi^{-1}(i)$ is the label of the alternative ranked $i$th best.

$(iii)$ The permutation describes a \emph{reordering} of a set of
ordered elements. For
example, books on a shelf in a library are reordered as readers look
into them. Here $\pi(i)=j$ means that the $i$th item in the original
order becomes the $j$th item after reordering.

Notice that a pairing or a ranking/ordering can be described by a permutation
only after a labelling is fixed on the sets. 

There is a vast literature of models for random permutations,
especially for ranking/ordering data $(ii)$. Comprehensive summaries
can be found, among others, in
Critch\-low, Fligner, and Verducci \cite{CFV} and Marden \cite{Marden}.
The larger classes of models are order statistics models (also called
Thurstonian models), distance-based models, paired
comparison models, and multistage models. 

Our starting point is the concept of Luce-de\-com\-po\-sability, also called
$L$-de\-com\-po\-sability (we will use the latter name). This property,
applied to orderings, was introduced by Critchlow, Fligner and Verducci in \cite{CFV}, motivated
by Luce's ranking postulate \cite{Luce59}. This postulate supposes
that the ordering of the alternatives is the result of
repeated selections of the best alternative from the remaining set of
alternatives. That is, for each set $C$ and each alternative $x\in C$,
the probability that $x$ is chosen as best from $C$ is given by
$p_C(x)$. Given these choice probabilities, the probability of the
ordering $\pi$ is given by
\[ p(\pi) = \prod_{k=1}^n p_{C_k}(\pi(k)),\]
where $C_k = \{ \pi(k), \ldots, \pi(n)\}$ is the set of available
alternatives at the $k$th step.
Luce combined this postulate with his choice axiom to develop the
Luce model. The choice axiom puts restrictions on the choice
probabilities $p_C(x)$. The ranking postulate without the choice axiom
produces a multistage model, a general $L$-de\-com\-po\-sable distribution. 
In other words, a random
ordering (or its distribution) is $L$-de\-com\-po\-sable, if its elements are chosen successively, satisfying the following
Markov property: in the $k$th step, the $k$th element is chosen from the
remaining ones, independently of the order of the first $k-1$
elements.  

In this paper, we wish to apply $L$-de\-com\-po\-sability to random
pairings, rankings, and reorderings as well. We notice that
$L$-de\-com\-po\-sability of a random pairing depends on the labelling of
the first set, and $L$-de\-com\-po\-sability of a random ranking depends on
the labelling of the alternatives. 
In fact, the labelling under which $L$-de\-com\-po\-sability is satisfied
(if such a labelling exists) can be interpreted as a natural order of the
elements or alternatives. If $n$ is relatively small, this natural
order may be found by an exhaustive search over all labellings.   

In the main part of the paper, we study random permutations $\Pi$, for which the distributions of both
$\Pi$ and $\Pi^{-1}$ are $L$-de\-com\-po\-sable. In this case we say that the
distribution of $\Pi$ (and of $\Pi^{-1}$) is bi-de\-com\-po\-sable. Bi-de\-com\-po\-sability of a random
\emph{pairing} implies a natural order on both sets, while
bi-de\-com\-po\-sability of a random \emph{ranking/ordering} implies a
natural order on the set of alternatives. This new concept is perhaps
most natural for random pairings, since they possess an obvious
symmetry in the two sets whose elements are paired, that is in $\Pi$
and $\Pi^{-1}$. The model is also attractive in the case of
rankings/orderings, since, as we shall show, a general
$L$-de\-com\-po\-sable distribution has $2^n(n/2 -1)+1$ free parameters,
while a general bi-de\-com\-po\-sable distribution possesses only
$\sum_{k=1}^{n-1} k^2$ parameters. This is still more than the usual
number of about $n$ or at most $n^2$ for most known models, however,
it is still very small compared to $n!$.

Another feature of both the $L$-de\-com\-po\-sable and the bi-de\-com\-po\-sable
families is that they can be characterised by certain conditional
independence relations, which we will describe later in detail. 
Therefore, by fitting these models with the maximum likelihood method,
and assessing the goodness of fit with the chi-square test, we can
test the hypothesis that the random permutation satisfies certain
conditional independence properties.

The paper is organised as follows. Section 2 deals with
$L$-de\-com\-po\-sability, and
Section 3 contains the main results about bi-de\-com\-po\-sable
distributions. In Section 3.1, we study general log-linear models for
random permutations, which we apply in Section
3.2 to de\-com\-po\-sable models, and prove Theorem
\ref{metszet}. Section 3.3 treats the problem
of maximum likelihood estimation in the models. It is shown that the maximum likelihood estimate
is explicite in the $L$-de\-com\-po\-sable model, but in the bi-de\-com\-po\-sable
model, it can only be obtained by iterative methods. In Section 4, we
investigate which models formulated in the literature are $L$-de\-com\-po\-sable or
bi-de\-com\-po\-sable. In Section 5 we study to what extent
the latent order with respect to which the distribution is
de\-com\-po\-sable can be estimated. Finally,
in Section 6, we fit the models to a real dataset, and Section 7
contains the proofs of some lemmas.

\section{$L$-de\-com\-po\-sability}
For integers $i\leq j, \; \{i:j\}$ denotes the set $\{k:\; i\leq k\leq j\}.$
For any vector $v=(v(1), \ldots, v(s))$, we call $v(i)$ the $i$th
element of $v$. For the set of the $i$th to $j$th elements, and for
the subvector of the $i$th to $j$th elements of $v$, introduce the notations
\begin{equation}  v\uos{i}{j}= \{v(i),\ldots, v(j)\}, \quad v\os{i}{j} = (v(i),\ldots,v(j)), \quad
1\leq i\leq j\leq s. \end{equation}
If $j<i$ then let $v\{i:j\}$ be the empty set.
Let $S_n$ stand for the symmetric group of all
permutations $\pi$ of $\{1, \ldots, n\}$.
We denote a probability distribution on $S_n$ by $p=\{ p(\pi): \pi\in S_n
\}$.  Denote by $\Pi: \Omega \to S_n$ a random permutation on a probability space
 $(\Omega, \mathcal{A},P)$ with distribution $p$, that
 is $P(\Pi = \pi) = p(\pi)$.
The idea of \emph{$L$-de\-com\-po\-sability} first appears in \cite{CFV}, and
was motivated by Luce's ranking postulate \cite{Luce59}. It states
that for any $k$, the value of $\Pi(k+1)$ depends on $\Pi\os{1}{k}$
only through $\Pi\uos{1}{k}$. Recall that the probability of a
permutation can always be written in the product form
\begin{equation} \label{szorzat} P(\Pi = \pi) = \prod_{k=0}^{n-1} P\left( \Pi(k+1)=\pi(k+1) \mid
 \Pi\os{1}{k}=\pi\os{1}{k}\right).\end{equation}
$L$-de\-com\-po\-sability means that the conditions
$\Pi\os{1}{k}=\pi\os{1}{k}$ can be replaced by the conditions $\Pi\uos{1}{k}=\pi\uos{1}{k}$.
We formulate this in the following definition, in four different
forms. For two permutations, $\pi\sigma$ denotes composition,
i.e. $\pi\sigma(i) = \pi(\sigma(i))$.

\begin{defi} \label{s1} Let $\Pi$ be a random permutation with probability distribution $p$ on $S_n$.
$\Pi$ or $p$ is called \emph{$L$-de\-com\-po\-sable}, if any of the following are
 satisfied.
\begin{enumerate}
\item
For every $2\leq k \leq n-2$, $\pi\in S_n$
 and $\sigma\in S_{k}$
\begin{multline}
\label{s2}
P\left( \Pi(k+1)=\pi(k+1) \mid  \Pi\os{1}{k}=\pi\os{1}{k}\right) =\\
= P\left(\Pi(k+1)=\pi(k+1) \mid \Pi\os{1}{k}=\pi\sigma\os{1}{k}\right),
\end{multline}
if both conditional probabilities are defined.
\item For every $2\leq k \leq n-2$ and $\pi\in S_n$
\begin{multline}
\label{s3}
P\left(\Pi(k+1)=\pi(k+1) \mid \Pi\os{1}{k}=\pi\os{1}{k}\right) = \\
= P\left(\Pi(k+1)=\pi(k+1) \mid \Pi\uos{1}{k} = \pi\uos{1}{k} \right),
\end{multline}
if the lefthandside is defined.
\item The random sets $\Pi\uos{1}{k}$ form a Markov chain
for $k=1,\ldots, n$.
\item
 There exists a $\Lambda$ nonnegative function defined on pairs
\begin{equation}\label{parok} (x,C): \, C\subset \{1,\ldots, n\}, \, x\not\in C,\end{equation}
 and $c$ constant, such that for all $\pi \in S_n$
\begin{equation}\label{s1.0}p(\pi) = c \prod_{k=0}^{n-1} \Lambda (\pi(k+1), \pi\uos{1}{k}).
\end{equation}
\end{enumerate}
\end{defi}

\begin{prop} The four properties in Definition \ref{s1} are equivalent.
\end{prop}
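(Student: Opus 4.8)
The plan is to regard \eqref{s2} and \eqref{s3} as the two conditional forms, the Markov condition as the process form, and \eqref{s1.0} as the multiplicative form, and to prove the cycle $2\Rightarrow 4\Rightarrow 1\Rightarrow 2$ together with the separate equivalence $2\Leftrightarrow 3$. First I would dispose of the boundary indices: for $k\in\{0,1,n-1\}$ the ordered prefix $\pi\os{1}{k}$ and the set $\pi\uos{1}{k}$ carry the same information (a single ordered element is a set; the first $n-1$ values determine the last one), so \eqref{s2} and \eqref{s3} are automatic there and only $2\le k\le n-2$ is substantive. For $1\Leftrightarrow 2$ I would invoke the law of total probability: for a set $C=\pi\uos{1}{k}$ of positive probability, the event $\{\Pi\uos{1}{k}=C\}$ is the disjoint union over the orderings $\tau$ of $C$ of the events $\{\Pi\os{1}{k}=\tau\}$, so $P(\Pi(k+1)=\pi(k+1)\mid \Pi\uos{1}{k}=C)$ is a convex combination of the numbers $P(\Pi(k+1)=\pi(k+1)\mid \Pi\os{1}{k}=\tau)$. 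If \eqref{s2} holds these numbers coincide, so the combination collapses to that common value, which is exactly \eqref{s3}; conversely, \eqref{s3} forces any two defined order-conditionals over the same set to agree, which is \eqref{s2}.

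The equivalence $2\Leftrightarrow 3$ rests on one structural remark. The sets $\Pi\uos{1}{1}\subset\Pi\uos{1}{2}\subset\cdots\subset\Pi\uos{1}{n}$ increase by exactly one element at each step, so the tuple $(\Pi\uos{1}{1},\dots,\Pi\uos{1}{k})$ and the ordered prefix $\Pi\os{1}{k}$ determine one another, and the event $\Pi\uos{1}{k+1}=\Pi\uos{1}{k}\cup\{x\}$ coincides with $\Pi(k+1)=x$. Under this dictionary the one-step Markov property of the random sets is verbatim \eqref{s3}, so the two statements are the same (the transitions at the boundary indices being trivially Markov, in agreement with the previous paragraph).

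For $2\Rightarrow 4$ I would start from the always-valid product form \eqref{szorzat}; by \eqref{s3} each factor equals $P(\Pi(k+1)=\pi(k+1)\mid\Pi\uos{1}{k}=\pi\uos{1}{k})$, which depends only on the pair $(\pi(k+1),\pi\uos{1}{k})$, so setting $\Lambda(x,C)$ equal to this conditional (and to $0$ when $P(\Pi\uos{1}{|C|}=C)=0$) and $c=1$ gives \eqref{s1.0}. For $4\Rightarrow 1$ I would compute $P(\Pi(k+1)=x\mid\Pi\os{1}{k}=\pi\os{1}{k})$ from \eqref{s1.0}: the common prefix product $\prod_{j<k}\Lambda(\pi(j+1),\pi\uos{1}{j})$ cancels between numerator and denominator, and the sums over the completions of $\pi$ organise into a backward factor $F(A)$ --- a sum of $\Lambda$-products over all orderings of the complement of $A$ --- which depends only on the set $A$. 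What remains is $\frac{\Lambda(x,C)\,F(C\cup\{x\})}{\sum_{y\notin C}\Lambda(y,C)\,F(C\cup\{y\})}$ with $C=\pi\uos{1}{k}$, manifestly independent of the order within $\pi\os{1}{k}$; this is \eqref{s2}, closing the cycle.

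The total-probability averaging and the set/tuple dictionary are routine; the step needing genuine care is $4\Rightarrow 1$, where I must introduce the backward factor $F$ and verify that it is a function of the set alone, so that after the prefix cancellation the conditional is order-free. The other point to watch, in both directions between \eqref{s1.0} and the conditional forms, is the treatment of pairs $(x,C)$ and permutations of probability zero: I must define $\Lambda$ on all pairs \eqref{parok} and argue that along any $\pi$ with $p(\pi)=0$ some factor of \eqref{s1.0} already vanishes, so that the identity holds on all of $S_n$ and not merely on the support of $p$.
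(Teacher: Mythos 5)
Your proposal is correct. The paper itself omits this proof as straightforward (deferring to Critchlow--Fligner--Verducci), and your argument is exactly the standard one it alludes to: the total-probability averaging for $1\Leftrightarrow 2$, the set/prefix dictionary for $2\Leftrightarrow 3$, the substitution into \eqref{szorzat} for $2\Rightarrow 4$, and the cancellation of the common prefix together with the set-dependent backward factor for $4\Rightarrow 1$ (including the check that a factor of \eqref{s1.0} vanishes off the support) are all sound and complete the cycle.
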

This proposition, in slightly different form, can be found in
\cite{CFV}, so we omit the straightforward proof. In the first two
equivalent forms of the definition, we could formally include $k=0,1,n-1$ as
well, but equations \eqref{s2} and \eqref{s3} are always satisfied for
these $k$-values. It follows that for $n\leq 3$, all distributions are
$L$-de\-com\-po\-sable.

The pair $(\Lambda, c)$ is called an $L$-decomposition of the distribution
$p$ if \eqref{s1.0} holds. By \eqref{szorzat} and \eqref{s3}, one $L$-decomposition of the $L$-de\-com\-po\-sable
distribution $p$ is given by $c=1$ and
\begin{equation} \label{s3.5} \Lambda(x,C) = P(\Pi(|C|+1) = x \mid \Pi\uos{1}{|C|}
= C),
\end{equation}
if the probability of the condition is positive,
otherwise $\Lambda(x,C)=0$. We call this $L$-decomposition \emph{canonical}.

The fact that the random sets $\Pi\uos{1}{k}$ form a Markov
chain is equivalent to the independence of the past and the future,
conditional on the present. This means that the first $k$ and last
$n-k$ elements of $\Pi$ are conditionally independent on the
condition that the set of the first $k$ elements is given. By the
well-known property of Markov chains, this observation generalises to
any consecutive partition of the set $\{1,\ldots,n\}$. 

Any $j$-tuple $\sop = (\sop_1, \ldots, \sop_j)$ with $\sop_0 =0< \sop_1 <
\ldots < \sop_j < n =\sop_{j+1}$ is a set of \emph{sections}, which define
a $\sopp$ \emph{consecutive partition} of the set $\{1,\ldots,n\}$ into
$j+1$ sets by
\begin{equation} \label{sop} \sopp = (\sopp_1, \ldots, \sopp_{j+1}), 
 \mbox{ where } \sopp_i = \uos{\sop_{i-1}+1}{\sop_{i}}.
\end{equation}
For the consecutive partition $\sopp$ and $\pi\in S_n$, define the
vector of unordered marginals
\begin{equation}\label{unordered m} \{\pi_{\sopp}\} = ( \{\pi_{\sopp_i}\}  :1\leq i\leq j+1), \mbox{ where
} \{\pi_{\sopp_i}\} = \pi\uos{\sop_{i-1}+1}{\sop_{i}} .\end{equation}
In contrast, $\pi_{\sopp_i} = \pi\os{\sop_{i-1}+1}{\sop_{i}}$ is an
ordered marginal.

$L$-de\-com\-po\-sability means that for any consecutive partition $\sopp$
in \eqref{sop} and any $\pi\in S_n$,
\begin{equation}\label{markov} P(\Pi=\pi \mid \{\Pi_{\sopp}\} =\{\pi_{\sopp}\} ) = \prod_{i=1}^{j+1}
P(\Pi_{\sopp_i}=\pi_{\sopp_i} \mid \{\Pi_{\sopp}\}
= \{\pi_{\sopp}\}).\end{equation}
Thus we have proved the following
\begin{prop}\label{lfeltfugg}
A random permutation $\Pi$ is $L$-de\-com\-po\-sable if and only if, for
every consecutive partition in \eqref{sop}, the ordered marginals
$\Pi_{\sopp_i}$, $1\leq i\leq j+1$ are conditionally independent,
given $\{\Pi_{\sopp}\}$, that is, \eqref{markov} holds.
\end{prop}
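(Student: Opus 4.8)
The plan is to prove both implications directly from the characterisations collected in Definition \ref{s1}, using the single observation that conditioning on the vector of unordered marginals $\{\Pi_{\sopp}\}$ is the same as conditioning on the prefix sets $\Pi\uos{1}{\sop_i}$ at the interior section points $i=1,\dots,j$. Indeed, from \eqref{sop} one has $\{\Pi_{\sopp_i}\} = \Pi\uos{1}{\sop_i}\setminus\Pi\uos{1}{\sop_{i-1}}$, so the tuple of block sets and the tuple $(\Pi\uos{1}{\sop_1},\dots,\Pi\uos{1}{\sop_j})$ determine one another, the endpoints $\Pi\uos{1}{0}=\emptyset$ and $\Pi\uos{1}{n}=\{1,\dots,n\}$ being deterministic. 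Throughout I assume the conditioning event has positive probability; the degenerate cases make \eqref{markov} vacuous.

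For the forward direction I would use the product form \eqref{s1.0}. Grouping the factors according to the sections, write $p(\pi) = c\prod_{i=1}^{j+1}\prod_{k=\sop_{i-1}}^{\sop_i-1}\Lambda(\pi(k+1),\pi\uos{1}{k})$. Once $\{\pi_{\sopp}\}$ is fixed, the prefix union $\Pi\uos{1}{\sop_{i-1}}$ entering the $i$th inner product is determined, and each set $\pi\uos{1}{k}$ with $\sop_{i-1}\le k<\sop_i$ equals this fixed union together with the first $k-\sop_{i-1}$ entries of the ordered marginal $\pi_{\sopp_i}$. Hence, given the conditioning, the $i$th inner product is a function of $\pi_{\sopp_i}$ alone, so the conditional law of $\Pi$ given $\{\Pi_{\sopp}\}=\{\pi_{\sopp}\}$ factorises into single-block functions. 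Summing out the other blocks then shows each factor is proportional to the corresponding conditional marginal, and the product reproduces exactly the righthand side of \eqref{markov}, which is the asserted conditional independence.

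For the converse it suffices to specialise \eqref{markov} to the two-block partitions, i.e. to a single section $\sop_1=k$, $1\le k\le n-1$. There \eqref{markov} asserts that $\Pi\os{1}{k}$ and $\Pi\os{k+1}{n}$ are conditionally independent given the pair $(\Pi\uos{1}{k},\Pi\uos{k+1}{n})$; but the second set is the complement of the first, so the conditioning reduces to $\Pi\uos{1}{k}$. Since $\Pi\os{1}{k}$ determines the initial segment $\Pi\uos{1}{0},\dots,\Pi\uos{1}{k}$ of the set process, while $\Pi\os{k+1}{n}$ together with $\Pi\uos{1}{k}$ determines the terminal segment, this is precisely the statement that the past and the future of the process $(\Pi\uos{1}{k})_k$ are conditionally independent given its present value. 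As this holds for every $k$, the process is a Markov chain, which is the third condition of Definition \ref{s1}, so $\Pi$ is $L$-de\-com\-po\-sable.

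The main obstacle is bookkeeping rather than probability: one must carefully identify which prefix unions are pinned down by $\{\pi_{\sopp}\}$ and verify that, after conditioning, each segment factor genuinely depends on its own ordered marginal only. The underlying probabilistic content — that conditioning on the section-point sets makes the segments of the Markov chain $\Pi\uos{1}{k}$ independent — is the standard consecutive-partition property of Markov chains already invoked in the paragraph preceding the statement. Once the correspondence between ordered marginals and chain segments is made precise, the forward direction follows from it and the converse is just its two-block instance.
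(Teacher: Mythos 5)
Your proof is correct and follows essentially the same route as the paper: the converse is the paper's own reduction to two-block partitions, recovering the Markov property of the set-valued chain $\Pi\uos{1}{k}$ (Property 3 of Definition \ref{s1}), and the forward direction is the standard fact that segments of a Markov chain between fixed cut points are conditionally independent given the states at those points, which the paper simply cites as well known. Your only deviation is that you derive that forward direction explicitly from the product form \eqref{s1.0} rather than quoting the Markov-chain fact; this is the same factorisation argument, just written out, and it is fine.
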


In the language of orderings, the unordered marginal $\{\pi_{\sopp}\}$
is the partial ordering of shape $\sopp$ derived from the ordering
$\pi$. This gives the set of alternatives receiving the first $\sop_1$
ranks, the set of alternatives receiving the next $\sop_2-\sop_1$
ranks, etc. Thus $L$-decomposability for orderings means that given
the partial ordering of arbitrary shape $\sopp$, the orderings within
each set of ranks $\sopp_i$ are independent.

\section{Bi-de\-com\-po\-sability}
We are interested in random permutations, for which the
distribution of both $\Pi$ and $\Pi^{-1}$ is $L$-de\-com\-po\-sable. 
If the
distribution of $\Pi$ is $p$, then the distribution of $\Pi^{-1}$ is
given by $p'(\pi) = p(\pi^{-1})$. Thus, $\Pi^{-1}$ is
$L$-de\-com\-po\-sable if and only if
\begin{equation} p(\pi) = p'(\pi^{-1}) = \prod_{(x,C)} \Lambda'(x,C)
 ^{m_L(\pi^{-1},(x,C)) }= \prod_{(x,C)} \Lambda'(x,C)
 ^{m_{L'}(\pi,(x,C)) } ,\end{equation}
where $\Lambda'$ is the canonical $L$-decomposition of $p'$, and the
 matrix $M_{L'}$ is derived from the matrix $M_L$ by interchanging each pair of rows
corresponding to inverse permutations $\pi$ and $\pi^{-1}$. We call such
 distributions $L'$-de\-com\-po\-sable, and denote their family by
 $\mathcal{P}_{L'}$.
The family of bi-de\-com\-po\-sable distributions will be denoted by
$\mathcal{P}_b = \mathcal{P}_{L}\cap   \mathcal{P}_{L'}$.

According to Proposition \ref{lfeltfugg}, bi-de\-com\-po\-sable random permutations have
the property that for every consecutive partition $\sopp$ in \eqref{sop}, the ordered marginals
$\Pi_{\sopp_i}$, $1\leq i\leq j+1$ are conditionally independent,
given $\{\Pi_{\sopp}\}$, and the ordered marginals
$\Pi^{-1}_{\sopp_i}$, $1\leq i\leq j+1$ are conditionally independent,
given $\{\Pi^{-1}_{\sopp}\}$. We now show that bi-de\-com\-po\-sable random
permutations satisfy additional conditional
independence statements. Let $\sopp$ and $\underline{\lambda}$ be two
consecutive partitions. For
$\pi\in S_n$, define the ordered marginals
\begin{equation} 
\pi_{\sopp_i \times \underline{\lambda}_j} = \{ (a, \pi(a)):\, a\in
\sopp_i, \, \pi(a)\in \underline{\lambda}_j \}.\end{equation}
We prove the next proposition in Section \ref{appendix}.
\begin{prop} \label{teglafugg} A  random permutation $\Pi$ is bi-de\-com\-po\-sable, if and
  only if for all pairs of consecutive partitions $\sopp$ and
  $\underline{\lambda}$ of sizes $s$ and $t$ respectively, the ordered marginals 
$\Pi_{\sopp_i \times \underline{\lambda}_j}$
for $1\leq i\leq s$ and $1\leq j\leq t$ are conditionally
independent, given $ \{\Pi_{\sopp} \}$ and $ \{
\Pi^{-1}_{\underline{\lambda}} \}$. 
\end{prop}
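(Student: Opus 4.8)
The plan is to derive both implications from Proposition \ref{lfeltfugg}, applied once to $\Pi$ and once to $\Pi^{-1}$. For the ``if'' direction I would specialise the asserted rectangle independence to trivial partitions. Taking $\underline{\lambda}$ to be the one-block partition of the codomain makes $\{\Pi^{-1}_{\underline{\lambda}}\}$ deterministic and reduces each $\Pi_{\sopp_i\times\underline{\lambda}_1}$ to the ordered marginal $\Pi_{\sopp_i}$; the rectangle statement then becomes exactly the conditional independence of the $\Pi_{\sopp_i}$ given $\{\Pi_{\sopp}\}$, which by Proposition \ref{lfeltfugg} is $L$-decomposability of $\Pi$. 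Taking $\sopp$ trivial symmetrically yields $L$-decomposability of $\Pi^{-1}$, so $\Pi$ is bi-decomposable.

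For the ``only if'' direction fix $\sopp$, $\underline{\lambda}$ and a realisable value $\{\Pi_{\sopp}\}=A$, $\{\Pi^{-1}_{\underline{\lambda}}\}=B$. First I would record the combinatorial picture: conditioning on $(A,B)$ fixes, for every cell $(i,j)$, the domain block $S_{ij}=\sopp_i\cap\Pi^{-1}(\underline{\lambda}_j)$ and the range block $T_{ij}=\Pi(\sopp_i)\cap\underline{\lambda}_j$, and $\Pi$ is reconstructed from the cell bijections $\Pi_{\sopp_i\times\underline{\lambda}_j}\colon S_{ij}\to T_{ij}$, whose supports range over a full product. The key observation is that $\{\Pi^{-1}_{\underline{\lambda}}\}$ is determined blockwise by the tuple $(\Pi_{\sopp_1},\dots,\Pi_{\sopp_s})$: which elements of $\sopp_i$ are sent into each $\underline{\lambda}_j$ is a function of $\Pi_{\sopp_i}$ alone. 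By Proposition \ref{lfeltfugg} applied to $\Pi$, the $\Pi_{\sopp_i}$ are conditionally independent given $\{\Pi_{\sopp}\}=A$; conditioning these independent variables further on $\{\Pi^{-1}_{\underline{\lambda}}\}=B$, a conjunction of events each bearing on a single $\Pi_{\sopp_i}$, preserves independence. Hence, given $(A,B)$, the ``rows'' $R_i:=\Pi_{\sopp_i}$ are mutually independent. Running the same argument for $\Pi^{-1}$ with $\sopp$ and $\underline{\lambda}$ exchanged shows that the ``columns'' $C_j:=\Pi^{-1}_{\underline{\lambda}_j}$ are mutually independent given $(A,B)$.

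It remains to upgrade row-independence together with column-independence to joint independence of all cells, and this is the step I expect to be the main obstacle. Since $R_i$ is an invertible function of the row $(\Pi_{\sopp_i\times\underline{\lambda}_j})_{j}$ of cell maps and $C_j$ of the column $(\Pi_{\sopp_i\times\underline{\lambda}_j})_{i}$, the conditional law $q$ of the cell array factorises both as a product over rows and as a product over columns. Working on its support and using positivity of $q$ there (the natural setting for log-linear models), I would take logarithms: $\log q$ is then simultaneously a sum of functions each depending on one row and a sum of functions each depending on one column. By uniqueness of the interaction decomposition, a term indexed by a set of cells can survive only if that set lies within a single row and within a single column, i.e.\ is a single cell; hence $\log q$ is a sum of per-cell functions, $q$ factorises over the cells, and this is precisely the conditional independence of the $\Pi_{\sopp_i\times\underline{\lambda}_j}$ given $\{\Pi_{\sopp}\}$ and $\{\Pi^{-1}_{\underline{\lambda}}\}$. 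The delicate point, rather than any probabilistic content, is the positivity feeding this separability argument: a priori $q$ need not charge every cell configuration, so one must either restrict to strictly positive distributions or first verify the elementary fact that a set which is simultaneously a product over rows and a product over columns is already a product over cells.
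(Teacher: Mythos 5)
Your reduction to Proposition \ref{lfeltfugg} in both directions, and your derivation of row- and column-independence given $E=\{\{\Pi_{\sopp}\}=A,\ \{\Pi^{-1}_{\underline{\lambda}}\}=B\}$, coincide with the paper's proof: the paper likewise observes that the event $\{\Pi^{-1}_{\underline{\lambda}}\}=B$ is a conjunction of constraints bearing on the individual $\Pi_{\sopp_i}$, so conditioning on it preserves the independence furnished by $L$-decomposability, and then argues symmetrically for the columns. Where you diverge is the final step, and the obstacle you anticipate there is not actually present. You need neither an interaction decomposition of $\log q$ nor positivity: having written $P(\Pi=\pi\mid E)=\prod_i P(\Pi_{\sopp_i}=\pi_{\sopp_i}\mid E)$, note (as you yourself do) that for fixed $i$ each cell $\Pi_{\sopp_i\times\underline{\lambda}_j}$ is a function of the single column variable $\Pi^{-1}_{\underline{\lambda}_j}$. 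Since the columns are conditionally independent given $E$, so are, for each fixed $i$, the cells $\Pi_{\sopp_i\times\underline{\lambda}_j}$, $1\le j\le t$ (functions of independent variables are independent), whence $P(\Pi_{\sopp_i}=\pi_{\sopp_i}\mid E)=\prod_j P(\Pi_{\sopp_i\times\underline{\lambda}_j}=\pi_{\sopp_i\times\underline{\lambda}_j}\mid E)$. Substituting this into the row factorisation gives the full product over cells; this is exactly how the paper finishes.

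If you prefer to keep your two-sided structure, the ``elementary fact'' you need is a statement about measures rather than supports, and it too holds without positivity: given $E$, the admissible configurations form a product set (each cell ranges freely over the bijections $S_{ij}\to T_{ij}$), and if the joint law of the array factorises both over rows and over columns, then summing the column factorisation over all cells outside row $i_0$ shows that the law of row $i_0$ is the product of its cell marginals, which is joint independence. Either way, the positivity caveat --- which would have restricted the conclusion to $\mathcal{P}_b^+$ although the proposition is stated for arbitrary bi-decomposable $\Pi$ --- can be dispensed with. A last small point: your ``if'' direction specialises $\underline{\lambda}$ to the one-block partition; this is the intended degenerate case of \eqref{sop} and is harmless, but it is worth saying explicitly that you are allowing $j=0$ there.
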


In the rest of the paper, we focus our attention on strictly positive
distributions, i.e. the case when $p(\pi)>0$ for all $\pi\in S_n$, which can be described as exponential families, or more
specifically, as log-linear models.
In general, by an exponential family of discrete (strictly positive) distributions
$p=(p_1,\ldots,p_s)$ we mean the family
\begin{equation} \label{exp_m}
\left\{ p: \, \sum_{i=1}^s p_i =1, \, \log p \in U \right\},
\end{equation}
where $U$ is a $t$-dimensional linear subspace of $\mathbb{R}^s$,
containing the vector $\egy = (1,\ldots,1)^T$. The number of free
parameters of the exponential family \eqref{exp_m} is $t-1$.
Extending the concept used in the analysis of
contingency tables, we may define a log-linear model as
an exponential family where the linear subspace $U$ has a generating
set consisting of $0-1$ vectors. All log-linear models for random
permutations appearing in this paper have the additional property that
the generating $0-1$ vectors of $U$ are indicator vectors of
different values of generalised marginals $|\pi_{\mathcal{B}}|$.

Denote by $\mathcal{P}_L^+$ and $\mathcal{P}_{L'}^+$ the family of
strictly positive $L$-decomposable and $L'$-decomposable distributions
respectively. Then the family of strictly positive bi-decomposable
distributions is given by $\mathcal{P}_b^+ = \mathcal{P}_L^+ \cap 
\mathcal{P}_{L'}^+$.  
We will show that both $\mathcal{P}_L^+$ and $\mathcal{P}_{L'}^+$
admit a log-linear representation with corresponding linear subspaces
$F$ and $G$ respectively.
It follows that $\mathcal{P}^+_b$ is also an exponential family with subspace
$H = F \cap G$. We will show that $\mathcal{P}^+_b$ is also a log-linear model, and we
 will determine the dimension and a basis of $H$
 (Theorem \ref{metszet}). This is made possible by the
 abundance of orthogonality. Two subspaces $U$ and $V$ of a
 Hilbert space are called
 orthogonal, if every pair of vectors $u\in U, v\in V$ are
 orthogonal. The closed subspaces $U$ and $V$ intersect each other
 orthogonally, if the (orthogonal) projection of $U$ on $V$ equals
 $U\cap V$, or equivalently, the projection of $V$ on $U$ equals
 $U\cap V$. Denote the operator of orthogonal projection on $U$ by
 $Pr_U$. Another equivalent condition for orthogonal intersection is
 that the projection operators $Pr_U$ and $Pr_V$ commute.
 Thus introducing the notation $\mme$ for orthogonal intersection,
\begin{equation}
U \mme V \iff Pr_U V = U\cap V \iff Pr_V U = U\cap V \iff Pr_U Pr_V =
Pr_V Pr_U.
\end{equation}

 We will show that $F$ and $G$
 intersect each other orthogonally, furthermore, we will find an
 orthogonal decomposition of both $F$ and $G$ into lower dimensional
 subspaces $F_k$ and $G_{\ell}$, such that each pair of subspaces
 $(F_k,G_{\ell})$ intersect each other orthogonally. Then it will
 suffice to determine the dimension and basis of the low dimensional
 subspaces $F_k \cap G_{\ell}$. Orthogonal intersection does not
 appear by coincidence, it is the consequence of conditional
 independence relations, as we will explain later on. Before carrying out this program in
 the following subsections, we state the main theorem of this section.

\begin{tet} \label{metszet} The
family of positive bi-de\-com\-po\-sable distributions is a log-linear
model with the number of free parameters equal to
\begin{equation} \label{metszetdim} d_n = \sum_{i=1}^{n-1} i^2.\end{equation}
 \end{tet}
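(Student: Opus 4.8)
The plan is to carry out the program announced just before the statement: realise $\mathcal{P}_L^+$ and $\mathcal{P}_{L'}^+$ as log-linear models with subspaces $F$ and $G$, identify $H=F\cap G$ as the subspace of their intersection, and then reduce the computation of $\dimz H$ to a grid of orthogonal intersections. Taking logarithms of the canonical $L$-decomposition \eqref{s1.0}--\eqref{s3.5}, a strictly positive $p$ lies in $\mathcal{P}_L^+$ iff $\log p = \log c + \sum_{(x,C)} \log\Lambda(x,C)\, m_L(\cdot,(x,C))$, where $m_L(\pi,(x,C))\in\{0,1\}$ is the indicator of $\{\pi:\ \pi\uos{1}{|C|}=C,\ \pi(|C|+1)=x\}$ (the columns of $M_L$). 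Hence $\mathcal{P}_L^+$ is the exponential family with subspace $F=\Span\{m_L(\cdot,(x,C)):\ C\subset\{1,\ldots,n\},\ x\notin C\}$, which is generated by $0$--$1$ vectors and contains $\egy$. The same argument applied to $\Pi^{-1}$ gives $\mathcal{P}_{L'}^+$ with a $0$--$1$ generated subspace $G$, whose generators are the indicators of $\{\pi:\ \pi^{-1}\uos{1}{|D|}=D,\ \pi^{-1}(|D|+1)=y\}$. Since a strictly positive $p$ satisfies $\log p\in F$ and $\log p\in G$ iff $\log p\in F\cap G$, the family $\mathcal{P}_b^+=\mathcal{P}_L^+\cap\mathcal{P}_{L'}^+$ is the exponential family with subspace $H=F\cap G$, and it remains to (a) produce a $0$--$1$ generating set for $H$ and (b) show $\dimz H=1+\sum_{i=1}^{n-1}i^2$.

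The computation rests on the inner product induced by the uniform distribution $U$ on $S_n$, for which the orthogonal projection onto the space of functions of a statistic $T$ is the conditional expectation $E_U[\,\cdot\mid T\,]$. The key principle, which I would isolate as a lemma, is that two such projections commute -- i.e.\ the corresponding subspaces intersect orthogonally in the sense of $\mme$ -- exactly when the underlying statistics are conditionally independent under $U$ given the meet of the two generated $\sigma$-fields; this is the precise form of the assertion that orthogonal intersection is ``the consequence of conditional independence relations,'' and it is the bridge to Propositions \ref{lfeltfugg} and \ref{teglafugg}. Using it, I would build orthogonal gradings $F=\Span\{\egy\}\oplus\bigoplus_{k} F_k$ and $G=\Span\{\egy\}\oplus\bigoplus_{\ell} G_\ell$ by levels, where $F_k$ isolates the information carried by the $k$th choice $(\Pi\uos{1}{k},\Pi(k{+}1))$ beyond the two marginals $\Pi\uos{1}{k}$ and $\Pi\uos{1}{k+1}$, and symmetrically $G_\ell$ for $\Pi^{-1}$.

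Because all the projections $Pr_{F_k}$, $Pr_{G_\ell}$ then pairwise commute, one obtains $F\mme G$ together with
\begin{equation}
Pr_H=Pr_F Pr_G=\sum_{k,\ell}Pr_{F_k}Pr_{G_\ell}=\sum_{k,\ell}Pr_{F_k\cap G_\ell},
\end{equation}
so that $H$ is the orthogonal direct sum of the cells $F_k\cap G_\ell$ (one of which is the constant line $\Span\{\egy\}$) and $\dimz H=\sum_{k,\ell}\dimz(F_k\cap G_\ell)$. It then remains to analyse each cell. Here I expect the ``rectangular'' marginals $\Pi_{\sopp_i\times\underline{\lambda}_j}$ of Proposition \ref{teglafugg} to furnish the right coordinates: $F_k\cap G_\ell$ should be spanned by $0$--$1$ indicators of the joint configuration of the $k$th domain-block with the $\ell$th range-block, which simultaneously exhibits the $0$--$1$ generating set required in (a) and reduces (b) to a finite combinatorial count. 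Assembling the cell dimensions and summing over the grid should collapse, after telescoping of the marginal contributions, to $1+\sum_{i=1}^{n-1}i^2$; I would check the bookkeeping against $n\le 3$, where every distribution is bi-de\-com\-po\-sable and the formula returns the full $n!-1$ parameters.

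The main obstacle is the orthogonality step: proving that the level subspaces $F_k$ and $G_\ell$ genuinely intersect orthogonally, i.e.\ that the relevant conditional-expectation operators under $U$ commute. This requires verifying the exact conditional independence of the ranking-level and ordering-level statistics under the uniform law, and choosing the gradings so that the cross intersections $F_k\cap G_\ell$ are clean enough to be counted; once that is in place, everything else is linear algebra and combinatorial bookkeeping.
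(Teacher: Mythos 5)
Your outline reproduces the paper's own program almost exactly: the log-linear subspaces $F$ and $G$, the orthogonal gradings $F=\oplus_k F^k\oplus\Span(\egy)$ and $G=\oplus_\ell G^\ell\oplus\Span(\egy)$ obtained by splitting off the ``set'' information $\widetilde{U}^k\subset U^k$, the bridge between commuting projections and conditional independence under the uniform law (this is precisely Lemma \ref{felt fugg}, applied through Lemmas \ref{fo1}, \ref{alg1}, \ref{alg2}), and the reduction $\dimz H=\sum_{k,\ell}\dimz(F^k\cap G^\ell)+1$. Up to that point the proposal is sound and is the paper's argument. The problem is that everything after the reduction --- which is the actual content of the theorem --- is deferred to ``combinatorial bookkeeping'' that ``should collapse, after telescoping.'' It does not telescope. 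The paper's computation of $\dimz(F^k\cap G^\ell)$ requires (i) coordinatizing $U^k\cap V^{\ell}$ by the indicator vectors $\rho^{k\ell}_{aq}$ of the pair of statistics $(a^{k\ell},q^{k\ell})$ in \eqref{TGbasis1}--\eqref{TGbasis2}, (ii) writing out the four families of scalar products against the basis vectors of $\widetilde{U}^k$ and $\widetilde{V}^{\ell}$ and observing that only three constraints per value of $a$ are independent, so that the number of solutions $\Delta^{k\ell}_a$ is $2$ generically but drops to $1$ or $0$ at the boundary values $a=0,1,\min(k,\ell)$, and (iii) a separate counting argument showing $|\{(k,\ell):\dimz(F^k\cap G^\ell)\geq i\}|=(n-i)^2$, which is where $\sum_{i=1}^{n-1}i^2$ actually comes from. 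None of this is present or even sketched in your proposal, and there is no shortcut around it that I can see: the answer depends on the precise degeneration pattern of the constraint system at the boundary values of $a$.

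A second, smaller inaccuracy: you assert that each cell $F_k\cap G_\ell$ ``should be spanned by $0$--$1$ indicators,'' and you lean on this both for the log-linearity claim and for the count. That is false: the paper's basis \eqref{mubazis} of $F^k\cap G^{\ell}$ consists of vectors like $\mu^{k\ell}_{a2}$ with coefficients $\pm a(\ell-a)$, $(k-a)(\ell-a)$, etc., which are not indicators. Log-linearity of $\mathcal{P}^+_b$ is instead obtained one level up, from Corollary \ref{loglin metszet}: $H=\Span(U^k\cap V^{\ell}:k,\ell)$, and each $U^k\cap V^{\ell}$ (not $F^k\cap G^{\ell}$) is spanned by the indicators of the values of the marginal $|\pi_{\mathcal{H}_{k\ell}}|$. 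You should route the log-linearity claim through that corollary and keep it separate from the dimension count of the orthogonal cells.
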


The proof of Theorem \ref{metszet} is given in Section 3.2.

\subsection{Partitions of the chessboard}
A permutation may be identified with a placement of $n$ rooks on the
$n\times n$ chessboard such that they cannot capture each other, i.e.
a placement with exactly one rook in each row and in each
column. Let us agree that we place the rooks ``row-wise'', that is
if $\pi(i) = j$, then we place a rook in the $j$th square of the
$i$th row. $\pi^{-1}$ can be read ``column-wise'' from the
rook-placement. This identification is helpful in the study of bi-de\-com\-po\-sability, because
bi-de\-com\-po\-sability is a symmetric property in $\pi$ and $\pi^{-1}$,
that is in rows and columns of the chessboard.

In this section, we define and study log-linear models for random permutations,
whose generators are partitions of the chessboard. More specifically,
we require that these partitions be the product of a row-partition and
a column-partition. A partition of the set $\uos{1}{n}$
into $s$ disjoint subsets (also called atoms) is given by
\begin{equation}\label{particio}
\mathcal{Z} = ( Z_1, \ldots, Z_s ): \ \cup_{i=1}^s Z_i = \uos{1}{n},\, Z_i \cap Z_j = \emptyset \; \forall i\neq j.
\end{equation}
If none of the sets $Z_i$ is empty, we call $s$ the size of the
partition. If of two such partitions, one partitions the set of rows,
the other the set of columns of the $n\times n$ chessboard, then the
result is a product partition of the board.

\begin{defi} A partition $\mathcal{B}$
 of the $n\times n$ chessboard is a \emph{product partition},
if there exist a partition
 $\mathcal{R}$ of size $r$ (called row-partition) and a partition
 $\mathcal{C}$ of size $c$ (called
 column-partition) of the set $\uos{1}{n}$ such
 that
\begin{equation}\label{racsos} \mathcal{B} = (B_{ij}): \ B_{ij} =
  R_i \times C_j =\{
 (x,y): x\in R_i, y\in C_j \}, \quad 1\leq i\leq r, \; 1\leq j\leq c. \end{equation}
We denote this by $\mathcal{B} = \mathcal{R} \times \mathcal{C}$.
\end{defi}

For any product partition $\mathcal{B}$ of the
$n\times n$ chessboard, we define the matrix-valued \emph{$\mathcal{B}$-marginal} function
$\pi \mapsto |\pi_{\mathcal{B}}|$ on $S_n$. For a permutation $\pi$, this statistic
gives the number of rooks falling into each $B_{ij}$ in
the rook-placement corresponding to $\pi$:
\begin{equation} \label{arnyek}
|\pi_{\mathcal{B}}|= (t_{ij}),\quad t_{ij} = |\{
1\leq s \leq n: (s,\pi(s))\in B_{ij} \}|. \end{equation}
In other words, if $\pi$ is a pairing between two labelled sets $A$
and $B$, then $|\pi_{\mathcal{B}}|$ is the $r\times c$ matrix
whose  $ij$th entry is the number of elements of
$A$ belonging to
$R_i$, which are paired with an element
of $B$ belonging to $C_j$.
The partition $\mathcal{B}$ of the chessboard gives rise to a partition
of $S_n$ via the fuction $\pi\mapsto|\pi_{\mathcal{B}}|$ : the permutations $\pi$ and $\sigma$ belong to the same atom
of this partition if and only if $|\pi_{\mathcal{B}}|=|\sigma_{\mathcal{B}}|.$
The subspace of $\mathbb{R}^{n!}$ spanned by the indicator vectors of these atoms will be
denoted by $U^{\mathcal{B}}.$ Equivalently
\begin{equation} \label{logalter}
U^{\mathcal{B}} = \{ v\in \mathbb{R}^{n!}: |\pi_{\mathcal{B}}|=|\sigma_{\mathcal{B}}|
\Rightarrow v(\pi)=v(\sigma) \}. \end{equation}
The vectors $v\in U^{\mathcal{B}}$ are just the functions $\pi\mapsto
v(\pi)$ on $S_n$, which are measurable with respect to the (atomic)
$\sigma$-algebra with atoms $\{ \pi: |\pi_{\mathcal{B}}| = (t_{ij})
\}$, where $(t_{ij})$ takes all possible values. We denote this
$\sigma$-algebra by $\sigma(\mathcal{B})$.

We will define a log-linear model by a set of product partitions,
called the generators of the model.
Of course, a similar definition is possible also with generator
partitions which are not of product form.
For the spanned subspace, we use the
notation $\Span(\cdot)$.

\begin{defi} \label{loglinmod} Let $\mathcal{B}_1, \ldots,
\mathcal{B}_s$ be product partitions of the chessboard, and use the
 simplifying notation $U^{\mathcal{B}_i} = U^i$. We say that $p$ belongs to the \emph{log-linear model generated
 by these partitions} if
\begin{equation} \log p(\pi) = \sum_{ i=1}^s \theta^i( |\pi_{\mathcal{B_i}}| ) \quad
\pi\in S_n, \end{equation}
where the $\theta^i$ functions are arbitrary parameters. Equivalently, we require
that
\begin{equation} \log p \in \Span(U^1, \ldots, U^s).
\end{equation}
We will use the notation $\mathcal{L}(\mathcal{B}_1, \ldots,
\mathcal{B}_s)$ for this model. 
\end{defi}

In the rest of this section, we give a sufficient condition, when the
intersection of two log-linear models is itself a log-linear model,
with directly identifyable generators. The
proofs can be found in Section \ref{appendix}. The first lemma
describes the relationship between conditional independence and
orthogonal intersection.

\begin{lem} \label{felt fugg} Let $(\Omega,\mathcal{A},P)$ be a probability space, and denote
  by $L_2(\mathcal{A})$ the Hilbert space of square-integrable random
  variables on it. For a $\sigma$-algebra $\mathcal{D}\subset \mathcal{A}$,
  denote by $L_2(\mathcal{D})$ the closed linear subspace of $L_2(\mathcal{
  A})$ consisting of all $\mathcal{D}$-measurable random variables.
  Let $\mathcal{D}_1$, $\mathcal{D}_2\subset \mathcal{A}$. Then $L_2(\mathcal{D}_1)\mme L_2(\mathcal{D}_2)$  
  if and only if $\mathcal{D}_1$ and $\mathcal{D}_2$ are conditionally
  independent, given $\mathcal{D}_1\cap \mathcal{D}_2$.
\end{lem}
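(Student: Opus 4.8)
The plan is to translate both sides of the claimed equivalence into the language of conditional expectation operators. The crucial identification is that for a sub-$\sigma$-algebra $\mathcal{D}$, the orthogonal projection $Pr_{L_2(\mathcal{D})}$ coincides with the conditional expectation operator $E(\cdot\mid\mathcal{D})$; this is the standard variational characterisation of conditional expectation as the best $L_2$-approximation by $\mathcal{D}$-measurable variables. Writing $\mathcal{D}_0 = \mathcal{D}_1\cap\mathcal{D}_2$, I would first record the identity $L_2(\mathcal{D}_1)\cap L_2(\mathcal{D}_2) = L_2(\mathcal{D}_0)$, which is immediate in the finite, atomic setting in which the lemma is applied, since a function constant on the atoms of both $\mathcal{D}_1$ and $\mathcal{D}_2$ is constant on the atoms of $\mathcal{D}_0$. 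Consequently $Pr_{L_2(\mathcal{D}_1)\cap L_2(\mathcal{D}_2)} = E(\cdot\mid\mathcal{D}_0)$. I would also recall the standard characterisation of conditional independence: $\mathcal{D}_1$ and $\mathcal{D}_2$ are conditionally independent given $\mathcal{D}_0$ if and only if $E(Y\mid\mathcal{D}_2) = E(Y\mid\mathcal{D}_0)$ for every bounded (equivalently, square-integrable) $\mathcal{D}_1$-measurable $Y$.

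For the direction assuming $L_2(\mathcal{D}_1)\mme L_2(\mathcal{D}_2)$: by the definition of orthogonal intersection the projections commute, and the product of two commuting orthogonal projections is itself the orthogonal projection onto the intersection of their ranges, so $E(\cdot\mid\mathcal{D}_1)\,E(\cdot\mid\mathcal{D}_2) = Pr_{L_2(\mathcal{D}_0)} = E(\cdot\mid\mathcal{D}_0)$. Applying this to a square-integrable $\mathcal{D}_1$-measurable $Y$, for which $E(Y\mid\mathcal{D}_1)=Y$, gives $E(Y\mid\mathcal{D}_2) = E(Y\mid\mathcal{D}_0)$, which is exactly the conditional independence criterion above. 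Conversely, assuming conditional independence, for arbitrary $X\in L_2(\mathcal{A})$ the variable $E(X\mid\mathcal{D}_1)$ is $\mathcal{D}_1$-measurable, so the criterion yields $E\bigl(E(X\mid\mathcal{D}_1)\mid\mathcal{D}_2\bigr) = E\bigl(E(X\mid\mathcal{D}_1)\mid\mathcal{D}_0\bigr)$, and since $\mathcal{D}_0\subseteq\mathcal{D}_1$ the tower property collapses the right-hand side to $E(X\mid\mathcal{D}_0)$. The same computation with the roles of $\mathcal{D}_1$ and $\mathcal{D}_2$ exchanged gives $E\bigl(E(X\mid\mathcal{D}_2)\mid\mathcal{D}_1\bigr) = E(X\mid\mathcal{D}_0)$ as well; hence $Pr_{L_2(\mathcal{D}_1)}$ and $Pr_{L_2(\mathcal{D}_2)}$ commute, which is one of the equivalent formulations of $L_2(\mathcal{D}_1)\mme L_2(\mathcal{D}_2)$.

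The step I expect to be the main obstacle is the identity $L_2(\mathcal{D}_1)\cap L_2(\mathcal{D}_2) = L_2(\mathcal{D}_0)$, needed to identify the projection onto the intersection with conditioning on $\mathcal{D}_0$. While the inclusion $\supseteq$ is automatic, the reverse inclusion can fail for general non-complete or non-atomic $\sigma$-algebras; in the finite permutation setting of the paper it is harmless, but I would either restrict to that setting or invoke a suitable regularity hypothesis. A secondary point to handle carefully is the passage between bounded and square-integrable test variables in the conditional independence criterion, which is routine by truncation and $L_2$-density but should be mentioned explicitly.
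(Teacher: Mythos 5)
Your proof is correct, but it takes a genuinely different route from the paper's. The paper never passes through the commutativity characterisation: it recasts $L_2(\mathcal{D}_1)\mme L_2(\mathcal{D}_2)$ as the vanishing of $E\bigl(\bigl[f-E(f\mid\mathcal{D}_0)\bigr]\bigl[g-E(g\mid\mathcal{D}_0)\bigr]\bigr)$ for all $f\in L_2(\mathcal{D}_1)$, $g\in L_2(\mathcal{D}_2)$ (with $\mathcal{D}_0=\mathcal{D}_1\cap\mathcal{D}_2$), gets one direction by noting that conditional independence makes even the conditional covariance vanish, and gets the converse by a hands-on choice of test functions $f=\chi(E_1)\chi(C)$, $g=\chi(E_2)\chi(C)$, where $C$ is the event on which the conditional covariance of the two indicators is positive; this forces that covariance to be nonpositive a.s., and symmetrically nonnegative. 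You instead work at the operator level: projections equal conditional expectations, commuting projections compose to the projection onto the intersection of ranges, and conditional independence is invoked through Doob's criterion $E(Y\mid\mathcal{D}_2)=E(Y\mid\mathcal{D}_0)$ for $\mathcal{D}_1$-measurable $Y$. Your argument is shorter and more conceptual but outsources two standard facts (the product-of-commuting-projections theorem and the operator characterisation of conditional independence), whereas the paper's is self-contained, working directly with indicators. Both arguments rest on the identity $L_2(\mathcal{D}_1)\cap L_2(\mathcal{D}_2)=L_2(\mathcal{D}_0)$; you are right to flag that this can fail for general $\sigma$-algebras (up to null sets) and is harmless only in the finite atomic setting where the lemma is applied --- the paper asserts it without comment, so your explicit caveat is if anything an improvement.
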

There is a partial ordering on the set of partitions. Partition
$\mathcal{Z}=(Z_1,\ldots, Z_s)$ is finer that $\mathcal{W}=(W_1,
\ldots, W_t)$ (or
$\mathcal{W}$ is coarser than $\mathcal{Z}$) if for every $i$ there
exists a $j$ such that $Z_i \subset W_j$. Denote this by
$\mathcal{Z}\succ  \mathcal{W}$. Clearly, this implies
$U^{\mathcal{Z}}\supset  U^{\mathcal{W}}$.
By the application of Lemma \ref{felt fugg}, we get
\begin{lem} \label{fo1} Let
$\mathcal{R'}\succ \mathcal{R}$ and $\mathcal{C'}\succ
\mathcal{C}$ be partitions of $\uos{1}{n}$. Then we have
\begin{equation} \label{lemma3}
U^{\mathcal{R}\times \mathcal{C'}} \mme U^{\mathcal{R'}\times
 \mathcal{C}} \mbox{ and \ }
U^{\mathcal{R}\times \mathcal{C'}} \cap U^{\mathcal{R'}\times
 \mathcal{C}} = U^{\mathcal{R}\times \mathcal{C}}.
\end{equation}
\end{lem}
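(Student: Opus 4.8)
The plan is to prove Lemma \ref{fo1} by reducing it to Lemma \ref{felt fugg}, exploiting the dictionary between the subspaces $U^{\mathcal{B}}$ and $L_2$-spaces of $\sigma$-algebras on the probability space $S_n$ equipped with, say, the uniform distribution. First I would observe that each product partition $\mathcal{R}\times\mathcal{C'}$ generates a $\sigma$-algebra $\sigma(\mathcal{R}\times\mathcal{C'})$ on $S_n$, and that by the discussion following \eqref{logalter} we have the identification $U^{\mathcal{R}\times\mathcal{C'}} = L_2\bigl(\sigma(\mathcal{R}\times\mathcal{C'})\bigr)$. So the two claims in \eqref{lemma3} translate into: the $\sigma$-algebras $\mathcal{D}_1=\sigma(\mathcal{R}\times\mathcal{C'})$ and $\mathcal{D}_2=\sigma(\mathcal{R'}\times\mathcal{C})$ intersect orthogonally in the $L_2$ sense, and their intersection equals $L_2\bigl(\sigma(\mathcal{R}\times\mathcal{C})\bigr)$. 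By Lemma \ref{felt fugg}, orthogonal intersection is equivalent to conditional independence of $\mathcal{D}_1$ and $\mathcal{D}_2$ given $\mathcal{D}_1\cap\mathcal{D}_2$, so the whole lemma follows once I establish the two set-theoretic facts about the generated $\sigma$-algebras together with that conditional independence.

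Next I would compute the intersection $\mathcal{D}_1\cap\mathcal{D}_2$ explicitly at the level of marginals. The statistic $|\pi_{\mathcal{R}\times\mathcal{C'}}|$ records, for each row-block $R_i$ and each (coarse) column-block $C'_j$, the number of rooks of $\pi$ falling in $R_i\times C'_j$; the statistic $|\pi_{\mathcal{R'}\times\mathcal{C}}|$ records the counts in $R'_k\times C_\ell$. A function measurable with respect to both must be constant on common level sets, and I would argue that the common refinement of these two statistics is exactly $|\pi_{\mathcal{R}\times\mathcal{C}}|$, i.e. that knowing both marginals determines, and is determined by, the count matrix over the finer product grid $\mathcal{R}\times\mathcal{C}$. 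The inclusion $\sigma(\mathcal{R}\times\mathcal{C})\subset\mathcal{D}_1\cap\mathcal{D}_2$ is immediate because $\mathcal{R}\times\mathcal{C}$ is finer than both generators (since $\mathcal{R'}\succ\mathcal{R}$ and $\mathcal{C'}\succ\mathcal{C}$ give the refinement relations among product partitions), yielding $U^{\mathcal{R}\times\mathcal{C}}\subset U^{\mathcal{R}\times\mathcal{C'}}\cap U^{\mathcal{R'}\times\mathcal{C}}$ by the remark $\mathcal{Z}\succ\mathcal{W}\Rightarrow U^{\mathcal{Z}}\supset U^{\mathcal{W}}$. The reverse inclusion is the content I expect to need care with.

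The main obstacle is verifying the conditional independence statement that feeds Lemma \ref{felt fugg}, namely that the row-coarse marginal $|\pi_{\mathcal{R}\times\mathcal{C'}}|$ and the column-coarse marginal $|\pi_{\mathcal{R'}\times\mathcal{C}}|$ are conditionally independent given the fine marginal $|\pi_{\mathcal{R}\times\mathcal{C}}|$ under the uniform distribution on $S_n$. The intuition is that once the fine count matrix over $\mathcal{R}\times\mathcal{C}$ is fixed, the two coarser statistics are deterministic functions of it (each is obtained by summing the fine counts over the appropriate blocks), so the conditioning is on a statistic that already refines both --- making the conditional independence hold trivially, indeed making both $\mathcal{D}_1$ and $\mathcal{D}_2$ conditionally degenerate given $\mathcal{D}_1\cap\mathcal{D}_2$. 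The delicate point is to confirm that $\mathcal{D}_1\cap\mathcal{D}_2$ really is $\sigma(\mathcal{R}\times\mathcal{C})$ and not something strictly coarser: I must rule out the possibility that distinct fine count matrices induce the same pair of coarse marginals while still lying in a common atom of both $\mathcal{D}_1$ and $\mathcal{D}_2$. Here the fact that $\mathcal{R}\times\mathcal{C}$ is the common refinement of the row-index structure of one generator and the column-index structure of the other is what forces the two coarse marginals jointly to recover the fine one, which is precisely where the hypotheses $\mathcal{R'}\succ\mathcal{R}$ and $\mathcal{C'}\succ\mathcal{C}$ are used.

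Once these pieces are in place the conclusion assembles immediately: the identification of $\mathcal{D}_1\cap\mathcal{D}_2$ with $\sigma(\mathcal{R}\times\mathcal{C})$ gives the equality $U^{\mathcal{R}\times\mathcal{C'}}\cap U^{\mathcal{R'}\times\mathcal{C}}=U^{\mathcal{R}\times\mathcal{C}}$, and the (trivial) conditional independence given that intersection gives $U^{\mathcal{R}\times\mathcal{C'}}\mme U^{\mathcal{R'}\times\mathcal{C}}$ via Lemma \ref{felt fugg}, completing the proof of both assertions in \eqref{lemma3}.
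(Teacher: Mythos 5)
Your overall strategy is exactly the paper's: work on $S_n$ with the uniform distribution, identify $U^{\mathcal{B}}$ with $L_2(\sigma(\mathcal{B}))$, compute the intersection $\sigma(\mathcal{R}\times\mathcal{C'})\cap\sigma(\mathcal{R'}\times\mathcal{C})=\sigma(\mathcal{R}\times\mathcal{C})$, and invoke Lemma \ref{felt fugg} via a conditional independence statement. However, you have reversed the refinement relation, and this reversal hides the actual content of the lemma. In the paper's convention $\mathcal{R'}\succ\mathcal{R}$ means $\mathcal{R'}$ is \emph{finer} than $\mathcal{R}$, so $\mathcal{R}\times\mathcal{C}$ is the \emph{coarsest} of the three product partitions, not the finest: $|\pi_{\mathcal{R}\times\mathcal{C}}|$ is a deterministic function of each of $|\pi_{\mathcal{R}\times\mathcal{C'}}|$ and $|\pi_{\mathcal{R'}\times\mathcal{C}}|$, not the other way around. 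Consequently your claim that, given $|\Pi_{\mathcal{R}\times\mathcal{C}}|$, the two statistics $|\Pi_{\mathcal{R}\times\mathcal{C'}}|$ and $|\Pi_{\mathcal{R'}\times\mathcal{C}}|$ are ``deterministic functions of it'' and hence ``conditionally degenerate'' is false; they are strictly finer statistics that remain genuinely random under that conditioning. The conditional independence of these two refinements given the coarse block counts, under the uniform distribution, is precisely the nontrivial combinatorial fact the paper's proof rests on (a hypergeometric-type factorization of the number of rook placements with prescribed refined counts), and your argument never actually establishes it.

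The same reversal infects your treatment of the intersection. The easy inclusion is $\sigma(\mathcal{R}\times\mathcal{C})\subset\mathcal{D}_1\cap\mathcal{D}_2$, because $\mathcal{R}\times\mathcal{C}$ is coarser than both generators. The substantive inclusion is the reverse one: a function of $\pi$ that depends only on $|\pi_{\mathcal{R}\times\mathcal{C'}}|$ and also only on $|\pi_{\mathcal{R'}\times\mathcal{C}}|$ must depend only on $|\pi_{\mathcal{R}\times\mathcal{C}}|$. Your phrase ``knowing both marginals determines, and is determined by, the count matrix over $\mathcal{R}\times\mathcal{C}$'' is half false (the pair of finer marginals is certainly not determined by the coarse one) and in any case does not prove the needed inclusion; what is required is a connectivity argument showing that any two permutations with the same $\mathcal{R}\times\mathcal{C}$-marginal can be joined by a chain of permutations alternately sharing the $\mathcal{R}\times\mathcal{C'}$-marginal and the $\mathcal{R'}\times\mathcal{C}$-marginal. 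So while the architecture of your proof matches the paper's, both verifications that feed Lemma \ref{felt fugg} are left unestablished, and one of them is justified by a statement that is actually wrong.
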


The next two lemmas formulate simple facts from linear algebra,
which will be needed in the sequel. We write $U = U_1
\oplus U_2$ for orthogonal decomposition, that is when $U =
\Span(U_1,U_2)$ and $U_1$ and $U_2$ are orthogonal. 

\begin{lem} \label{alg1}
Suppose that $U = \Span( U_i: i\in I)$, $V = \Span(V_j: j\in J)$ are two subspaces, and $U_i \mme V_j$ for every pair
$i,j$. Then $U\mme V$, and  $U\cap V = \Span( U_i \cap V_j: i\in I,
 j\in J)$. \end{lem}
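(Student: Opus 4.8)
The plan is to run everything through the operator characterisation $U \mme V \iff Pr_U Pr_V = Pr_V Pr_U$ recorded just before Theorem \ref{metszet}, so that both claims become assertions about commuting orthogonal projections. The device I would establish first is a reduction principle: \emph{if a subspace $X$ satisfies $X \mme Y_k$ for every $k$, then $X \mme \Span(Y_k:k)$.} For an orthogonal (hence self-adjoint) projection, $Pr_X$ commutes with $Pr_Y$ precisely when $Y$ is invariant under $Pr_X$; and writing $Y = \Span(Y_k)$, any $y = \sum_k y_k$ with $y_k \in Y_k$ satisfies $Pr_X y_k = Pr_X Pr_{Y_k} y_k = Pr_{Y_k} Pr_X y_k \in Y_k \subseteq Y$, so $Pr_X Y \subseteq Y$ and the principle follows.

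Granting this, the first assertion is immediate: applying the principle with $X = U_i$ to the family $(V_j)_j$ gives $U_i \mme V$ for each $i$, and applying it again with $X = V$ to the family $(U_i)_i$ (using that $\mme$ is symmetric) gives $V \mme U$, that is $U \mme V$.

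For the intersection formula the inclusion $\Span(U_i \cap V_j) \subseteq U \cap V$ is trivial, so the work is the reverse inclusion, and here lies the main obstacle: the families $(U_i)$ and $(V_j)$ need not be mutually orthogonal, so a vector of $U$ carries no canonical orthogonal expansion along the $U_i$ and one cannot merely add up component projections. I would circumvent this by decomposing in two stages, each time applying a projection to an arbitrary (non-orthogonal) representation. Given $x \in U \cap V$, write $x = \sum_i u_i$ with $u_i \in U_i$; since $x \in V$, $x = Pr_V x = \sum_i Pr_V u_i$, and because $U_i \mme V$ the identity $Pr_V u_i = Pr_V Pr_{U_i} u_i = Pr_{U_i} Pr_V u_i$ shows $Pr_V u_i \in U_i \cap V$. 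Hence $U \cap V = \Span(U_i \cap V : i)$. Running the same argument one level down, any $z \in U_i \cap V$ expands as $z = \sum_j v_j$ with $v_j \in V_j$, and since $z \in U_i$ with $U_i \mme V_j$ one gets $z = Pr_{U_i} z = \sum_j Pr_{U_i} v_j$ with each $Pr_{U_i} v_j \in U_i \cap V_j$, so $U_i \cap V = \Span(U_i \cap V_j : j)$. Substituting the second identity into the first gives $U \cap V = \Span(U_i \cap V_j : i, j)$, completing the argument. The only point requiring genuine care is the invariance step underpinning the reduction principle, which is exactly what promotes pairwise commutation of the generators to commutation of the spanning projections; the remainder is bookkeeping with the two-stage projection trick.
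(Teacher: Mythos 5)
Your argument is correct and follows essentially the same route as the paper's proof: both rest on the characterisation of $\mme$ via invariance/commutation of the orthogonal projections, promote the pairwise hypothesis to the spans by applying a projection term-by-term to a (not necessarily orthogonal) representation $\sum_i u_i$, and obtain the intersection formula by observing that the projected generators land in $\Span(U_i\cap V_j)$. Your two-stage decomposition $U\cap V=\Span(U_i\cap V)=\Span(U_i\cap V_j)$ is just an element-wise rephrasing of the paper's chain $Pr_U V_j=Pr_{V_j}U\subset W$, so no substantive difference.
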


\begin{lem} \label{alg2} Let $U=U_1 \oplus U_2$ and $V=V_1 \oplus V_2$ be
 two subspaces with orthogonal decompositions. If $U \mme V$, $U_1
 \mme V_1$, $U \mme V_1$, and $U_1 \mme V$ hold,
 then $U_2 \mme V_2$ is also true, and
\[ U\cap V = (U_1 \cap V_1) \oplus (U_1 \cap V_2) \oplus (U_2 \cap
 V_1) \oplus (U_2 \cap V_2). \]
\end{lem}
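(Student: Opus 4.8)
The plan is to work entirely with orthogonal projection operators and exploit the characterisation $U \mme V \iff Pr_U Pr_V = Pr_V Pr_U$ recalled before Theorem \ref{metszet}. Since the decompositions $U = U_1 \oplus U_2$ and $V = V_1 \oplus V_2$ are orthogonal, I may write $Pr_U = Pr_{U_1} + Pr_{U_2}$ and $Pr_V = Pr_{V_1} + Pr_{V_2}$, with $Pr_{U_1} Pr_{U_2} = Pr_{U_2} Pr_{U_1} = 0$ and likewise for the $V_i$. In this language the four hypotheses read $Pr_U Pr_V = Pr_V Pr_U$, $\;Pr_{U_1} Pr_{V_1} = Pr_{V_1} Pr_{U_1}$, $\;Pr_U Pr_{V_1} = Pr_{V_1} Pr_U$ and $Pr_{U_1} Pr_V = Pr_V Pr_{U_1}$, and the goal is to show $Pr_{U_2} Pr_{V_2} = Pr_{V_2} Pr_{U_2}$ together with the claimed decomposition of $U \cap V$.

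For the commutation of $Pr_{U_2}$ and $Pr_{V_2}$ I would substitute $Pr_{U_2} = Pr_U - Pr_{U_1}$ and $Pr_{V_2} = Pr_V - Pr_{V_1}$ and expand both $Pr_{U_2} Pr_{V_2}$ and $Pr_{V_2} Pr_{U_2}$ into four terms each. Every resulting product $Pr_U Pr_V$, $Pr_U Pr_{V_1}$, $Pr_{U_1} Pr_V$, $Pr_{U_1} Pr_{V_1}$ commutes by one of the four hypotheses, so the two expansions coincide term by term; this gives $U_2 \mme V_2$ and is the one genuinely computational step. The same subtraction trick yields the two \emph{mixed} relations not assumed directly: $U_1 \mme V_2$ follows from $U_1 \mme V$ and $U_1 \mme V_1$, while $U_2 \mme V_1$ follows from $U \mme V_1$ and $U_1 \mme V_1$. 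Thus every pair $(U_i, V_j)$ with $i,j \in \{1,2\}$ intersects orthogonally.

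For the decomposition I would invoke the standard fact that when two orthogonal projections commute their product is the orthogonal projection onto the intersection of their ranges; hence $Pr_{U \cap V} = Pr_U Pr_V$ and $Pr_{U_i \cap V_j} = Pr_{U_i} Pr_{V_j}$ for all $i,j$, the latter using the four orthogonal intersections just established. Expanding
\[ Pr_{U \cap V} = Pr_U Pr_V = (Pr_{U_1} + Pr_{U_2})(Pr_{V_1} + Pr_{V_2}) = \sum_{i,j} Pr_{U_i} Pr_{V_j} = \sum_{i,j} Pr_{U_i \cap V_j} \]
then expresses $Pr_{U \cap V}$ as a sum of four projections. Since $U_1 \perp U_2$ forces $U_1 \cap V_j \perp U_2 \cap V_k$ and $V_1 \perp V_2$ forces $U_i \cap V_1 \perp U_j \cap V_2$, the four subspaces $U_i \cap V_j$ are pairwise orthogonal, so the sum of their projections is exactly the projection onto their orthogonal direct sum. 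Comparing the ranges of equal projections yields $U \cap V = (U_1 \cap V_1) \oplus (U_1 \cap V_2) \oplus (U_2 \cap V_1) \oplus (U_2 \cap V_2)$, as claimed.

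The main obstacle is conceptual rather than technical: once the statement is linearised through commuting projections, $U_2 \mme V_2$ drops out of a four-term expansion and the decomposition follows from the two standard projection facts. The point requiring care is the verification of the two unstated mixed relations $U_1 \mme V_2$ and $U_2 \mme V_1$, since these are precisely what license replacing each $Pr_{U_i} Pr_{V_j}$ by $Pr_{U_i \cap V_j}$ in the final expansion.
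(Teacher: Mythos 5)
Your proof is correct, and its core is the paper's own argument: the paper proves $U_2\mme V_2$ by exactly your expansion $Pr_{U_2}Pr_{V_2}=(Pr_U-Pr_{U_1})(Pr_V-Pr_{V_1})$ into four terms, each commuting by hypothesis. Where you diverge is the direct-sum decomposition: the paper disposes of it with the single remark that it ``follows from Lemma \ref{alg1}'', which tacitly requires the two mixed relations $U_1\mme V_2$ and $U_2\mme V_1$ (obtained by the same subtraction trick, but never stated) together with the pairwise orthogonality of the four intersections; you instead make both of these explicit and then conclude via the identity $Pr_{U\cap V}=Pr_U\,Pr_V$ for commuting orthogonal projections, summing the four projections $Pr_{U_i}Pr_{V_j}=Pr_{U_i\cap V_j}$. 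The two routes are equivalent in substance. The paper's citation of Lemma \ref{alg1} is shorter but leaves the reader to fill in the mixed relations; your version is self-contained and pinpoints exactly what must be checked, at the cost of invoking one fact not stated in the paper --- that the product of two commuting orthogonal projections is the orthogonal projection onto the intersection of their ranges --- which is a mild strengthening of the paper's own characterisation of $\mme$ and is immediate (the product is idempotent and self-adjoint).
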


As a direct corollary of Lemma \ref{fo1} and Lemma \ref{alg1}, we obtain
\begin{corol} \label{loglin metszet}
Let $\mathcal{L}(\mathcal{R}_i \times \mathcal{C}:\,
  i=1,\ldots, s)$ and $\mathcal{L}(\mathcal{R}
\times \mathcal{C}_j:\, j=1,\ldots, t)$ be two log-linear models, and suppose that $\mathcal{R}\succ
\mathcal{R}_i$ and $\mathcal{C}\succ \mathcal{C}_j$ for all $1\leq
i\leq s$, $1\leq j\leq t$. Then the intersection of the two models is
the log-linear model $\mathcal{L}(\mathcal{R}_i \times \mathcal{C}_j:\,
  i=1,\ldots, s, \, j=1,\ldots, t)$.
\end{corol}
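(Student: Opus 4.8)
The plan is to establish Corollary \ref{loglin metszet} by applying the two quoted lemmas in the order in which they are stated, treating the two models as spanned subspaces and verifying the hypotheses of Lemma \ref{alg1} pairwise. Write $U = \Span(U^{\mathcal{R}_i \times \mathcal{C}}: i=1,\ldots,s)$ for the subspace defining the first model and $V = \Span(U^{\mathcal{R} \times \mathcal{C}_j}: j=1,\ldots,t)$ for the second, so that $p$ belongs to the first model iff $\log p \in U$ and to the second iff $\log p \in V$. The intersection of the two models is then exactly the exponential family whose subspace is $U \cap V$, so the whole statement reduces to identifying $U \cap V$ with the subspace $\Span(U^{\mathcal{R}_i \times \mathcal{C}_j}: i,j)$ that defines the claimed log-linear model $\mathcal{L}(\mathcal{R}_i \times \mathcal{C}_j: i,j)$.

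The key step is to check that the hypotheses of Lemma \ref{alg1} hold with index families $U_i = U^{\mathcal{R}_i \times \mathcal{C}}$ and $V_j = U^{\mathcal{R}\times \mathcal{C}_j}$. This requires $U_i \mme V_j$ for every pair $(i,j)$, which is precisely the first conclusion of Lemma \ref{fo1}: I would apply that lemma with row-partitions $\mathcal{R} \succ \mathcal{R}_i$ and column-partitions $\mathcal{C} \succ \mathcal{C}_j$ (these refinement relations are exactly the standing hypotheses of the corollary), reading off $U^{\mathcal{R}_i \times \mathcal{C}} \mme U^{\mathcal{R} \times \mathcal{C}_j}$. Lemma \ref{alg1} then yields both $U \mme V$ and the identity $U \cap V = \Span(U_i \cap V_j: i,j)$.

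The remaining point is to rewrite each atom $U_i \cap V_j$ in product form. Here I would invoke the second conclusion of Lemma \ref{fo1}, namely $U^{\mathcal{R}_i \times \mathcal{C}} \cap U^{\mathcal{R} \times \mathcal{C}_j} = U^{\mathcal{R}_i \times \mathcal{C}_j}$, again valid because $\mathcal{R} \succ \mathcal{R}_i$ and $\mathcal{C} \succ \mathcal{C}_j$. Substituting this into the span gives $U \cap V = \Span(U^{\mathcal{R}_i \times \mathcal{C}_j}: i=1,\ldots,s,\, j=1,\ldots,t)$, which by Definition \ref{loglinmod} is exactly the subspace of the log-linear model $\mathcal{L}(\mathcal{R}_i \times \mathcal{C}_j: i,j)$, completing the argument.

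Since both lemmas are already available, the corollary is genuinely a direct consequence and there is no serious obstacle; the only care needed is bookkeeping, ensuring that each application of Lemma \ref{fo1} respects the correct orientation of the refinement hypotheses (it is $\mathcal{R}_i$ and $\mathcal{C}_j$ that are the fine partitions paired against the coarse $\mathcal{C}$ and $\mathcal{R}$), and confirming that the passage from subspaces back to the families of distributions is legitimate, i.e. that the intersection of two exponential families with subspaces $U$ and $V$ is the exponential family with subspace $U \cap V$. This last equivalence is immediate from the definition of an exponential family in \eqref{exp_m}, since $\log p \in U$ and $\log p \in V$ together are the same as $\log p \in U \cap V$.
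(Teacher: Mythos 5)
Your argument is correct and is exactly the route the paper intends: the paper gives no written proof, stating only that the corollary follows directly from Lemma \ref{fo1} and Lemma \ref{alg1}, and your proposal supplies precisely those two applications (pairwise orthogonal intersection from Lemma \ref{fo1}, then the span identity from Lemma \ref{alg1}, with $U^{\mathcal{R}_i \times \mathcal{C}} \cap U^{\mathcal{R} \times \mathcal{C}_j} = U^{\mathcal{R}_i \times \mathcal{C}_j}$ rewriting the atoms). The orientation of the refinement hypotheses and the reduction of the model-level statement to the subspace-level statement are both handled correctly.
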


\subsection{De\-com\-po\-sability as a log-linear model}
In this section, we prove Theorem \ref{metszet}.
Recall from \eqref{sop} the definition of a consecutive partition of
$\uos{1}{n}$. A consecutive partition, which contains only two
neighboring sections is called a \emph{bold section}. That is, the
$k$th bold section, containing the sections $k-1$ and $k$, is given by
\begin{equation}\label{bold} \Phi_k = ( \uos{1}{k-1}, \{k \},
  \uos{k+1}{n} ), \quad 2\leq k \leq n-1.
\end{equation}
We will extend the notation $\Phi_k$ to $k=1$ and $k=n$ for the sake of
convenience.
The (consecutive) partition which partitions $\uos{1}{n}$ into $n$
sets is called the \emph{full partition}:
\begin{equation} \label{full}
\Psi = ( \{1\}, \{2\}, \ldots, \{n\}).\end{equation}

From the multiplicative form \eqref{s1.0}, it is straightforward that
the $L$-de\-com\-po\-sable exponential family $\mathcal{P}_L^+$ is the
log-linear model 
\begin{equation} \label{bkdef} \mathcal{P}_L^+ = 
\mathcal{L}( \Phi_k \times \Psi: 1\leq k\leq n). \end{equation}
That is, the generators are the products of bold sections with the
full partition. The reason is that the $0-1$ matrix $|\pi_{\Phi_k
  \times \Psi}|$ is equivalent to the vector of unordered marginals
$\{ \pi_{\Phi_k} \}$ defined in \eqref{unordered m}.
 
 A submodel which we will use in the sequel has as generators
 coarser partitions. A consecutive partition, which contains only one
 section is called a \emph{thin section}. Denote the $k$th thin
 section by
\begin{equation}\label{thin} \widetilde{\Phi}_k = ( \uos{1}{k}, 
  \uos{k+1}{n} ), \quad 1\leq k \leq n-1.
\end{equation}
Notice that $\Phi_1 = \widetilde{\Phi}_1$ and $\Phi_{n} =
\widetilde{\Phi}_{n-1}$. We extend the notation $\widetilde{\Phi}_k$
to $k=n$ for the sake of convenience. The log-linear model
\begin{equation} \label{rhullam} 
\mathcal{P}_{L_S}^+ =\mathcal{L}(\widetilde{\Phi}_k\times \Psi: 1\leq k\leq n-1)\end{equation}
is a submodel of the $L$-de\-com\-po\-sable family, which consists of positive distributions $p$ for which
the conditional probability $P(\Pi(|C|+1)= x \mid \Pi\uos{1}{|C|} = C)$
 depends on the pair $(x,C)$ only through their union, $C\cup \{x\}$,
where, as before, $\Pi$ is a random permutation with distribution $p$.
We will call these distributions \emph{$L_S$-de\-com\-po\-sable}, where $S$
 stands for ``set'', indicating that the choice of the $k$th element
 of the random permutation depends only on the set to be formed by the first $k$
 elements. We define \emph{$L'_S$-de\-com\-po\-sable} distributions similarly.

Recall from \eqref{logalter} the definition of the subspace
corresponding to a product partition, and for the sake of brevity introduce the notations
\begin{equation}
U^{\Phi_k \times \Psi} = U^k,\;  U^{\widetilde{\Phi}_k\times \Psi}= \widetilde{U}^k.
\end{equation}
From the partial ordering of partitions, we get that $\widetilde{U}^k \subset
 U^k$, denote the orthogonal complement of $\widetilde{U}^k$ in
 $U^k$ by $F^k$.
By the same argument, $\widetilde{U}^k \subset U^{k+1}$ also holds. This yields that
\[ \Span( U^k: 1\leq k \leq n)  = \Span( F^k: 1\leq k \leq
n, \widetilde{U}^n). \]
Since $\Phi_1 = \widetilde{\Phi}_1$, we get $F^1 = \{ \nulla
\}$. In addition, as $\widetilde{\Phi}_n$ is the trivial partition,
$\widetilde{U}^n = \Span( \egy)$. Thus the subspace belonging to the
$L$-de\-com\-po\-sable log-linear model is
\begin{equation} \label{fspan} F =  \Span( U^k: 1\leq k \leq n) = \Span( F^k: 2\leq k <
n, \egy ). \end{equation}
In the next lemma, we show that the subspaces on the righthandside of \eqref{fspan}
not only span $F$, but give an orthogonal decomposition. The proof is
found in Section \ref{appendix}.

\begin{lem}\label{felbontas} The subspaces $F^k$  $(2\leq k \leq n)$ are orthogonal
 to each other and to the vector $\egy$.
\end{lem}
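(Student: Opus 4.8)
The plan is to do everything inside $L_2(S_n)$ equipped with the \emph{uniform} measure on $S_n$: the standard inner product on $\mathbb{R}^{n!}$ equals $n!$ times the uniform-measure inner product, so orthogonality is the same, and the orthogonal projection onto a subspace $U^{\mathcal{B}}$ of functions constant on the atoms of $\sigma(\mathcal{B})$ is exactly the conditional expectation $E[\,\cdot\mid\sigma(\mathcal{B})]$ under the uniform law. By the identification of $|\pi_{\Phi_k\times\Psi}|$ with the unordered marginal $\{\pi_{\Phi_k}\}$ recorded after \eqref{bkdef}, we have $U^k = L_2(\sigma(\Pi\uos{1}{k-1},\Pi(k)))$ and $\widetilde U^k = L_2(\sigma(\Pi\uos{1}{k}))$, so that $f\in F^k$ means precisely that $f$ is a function of $(\Pi\uos{1}{k-1},\Pi(k))$ satisfying $E[f\mid\Pi\uos{1}{k}]=0$. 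Orthogonality to $\egy$ is then immediate: $\egy$ is constant, hence lies in $\widetilde U^k$, and $F^k$ is by definition orthogonal to $\widetilde U^k$. It remains to prove $F^k\perp F^{\ell}$ for $k<\ell$.

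I would reduce this to a single projection statement. Writing $Pr_{U^k}$ for the orthogonal projection onto $U^k$, for $f\in F^k\subseteq U^k$ and any $g\in U^{\ell}$ one has $\langle f,g\rangle = \langle Pr_{U^k}f, g\rangle = \langle f, Pr_{U^k}g\rangle$ by self-adjointness of $Pr_{U^k}$. Hence it suffices to prove the inclusion
\begin{equation}\label{projincl} Pr_{U^k}\, U^{\ell}\ \subseteq\ \widetilde U^k \qquad (k<\ell), \end{equation}
for then $Pr_{U^k}g\in\widetilde U^k$ while $f\in F^k$ is orthogonal to $\widetilde U^k$, giving $\langle f,g\rangle=0$ and thus $F^k\perp U^{\ell}\supseteq F^{\ell}$. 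Unwinding definitions, \eqref{projincl} asserts that for every $g\in U^{\ell}$ the conditional expectation $E[g\mid\Pi\uos{1}{k-1},\Pi(k)]$ in fact depends on its argument only through the set $\Pi\uos{1}{k}$, i.e. equals $E[g\mid\Pi\uos{1}{k}]$.

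The one substantive step is this last identity, which is a conditional independence statement of exactly the kind linked to orthogonality by Lemma \ref{felt fugg}. The extra information carried by $\sigma(\Pi\uos{1}{k-1},\Pi(k))$ beyond $\sigma(\Pi\uos{1}{k})$ is the identity of $\Pi(k)$, a function of the ordered marginal $\Pi\os{1}{k}$; whereas, once the set $\Pi\uos{1}{k}$ is fixed, any $g\in U^{\ell}$ (a function of $\Pi\uos{1}{\ell-1}$ and $\Pi(\ell)$, with $\ell>k$, so that both are determined by the set $\Pi\uos{1}{k}$ together with the ordering of the remaining values) is a function of the ordered marginal $\Pi\os{k+1}{n}$. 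Applying Proposition \ref{lfeltfugg} to the uniform distribution (which is $L$-de\-com\-po\-sable) with the two-block consecutive partition into $\uos{1}{k}$ and $\uos{k+1}{n}$, the marginals $\Pi\os{1}{k}$ and $\Pi\os{k+1}{n}$ are conditionally independent given $\Pi\uos{1}{k}$; hence so are $\Pi(k)$ and $g$, and conditioning additionally on $\Pi(k)$ leaves $E[g\mid\Pi\uos{1}{k}]$ unchanged, which is exactly \eqref{projincl}. The main obstacle is thus bookkeeping rather than depth: correctly identifying that $\Pi(k)$ is ``inside'' the first block while $g$ is ``outside'' it (this is where $\ell>k$ is used), after which the uniform distribution's $L$-de\-com\-po\-sability supplies the required independence and the projection argument closes the proof.
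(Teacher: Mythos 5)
Your proof is correct and follows essentially the same route as the paper's: both work in $L_2(S_n)$ under the uniform measure, identify $F^k$ as the functions of $(\Pi\uos{1}{k-1},\Pi(k))$ with vanishing conditional expectation given $\Pi\uos{1}{k}$, and reduce the orthogonality $F^k\perp F^{\ell}$ ($k<\ell$) to the conditional independence of $\sigma(\Phi_k\times\Psi)$ and $\sigma(\Phi_{\ell}\times\Psi)$ given $\sigma(\widetilde{\Phi}_k\times\Psi)$ under the uniform law. The only cosmetic difference is that you package this as the projection inclusion $Pr_{U^k}U^{\ell}\subseteq\widetilde U^k$ and justify the conditional independence explicitly via Proposition \ref{lfeltfugg}, where the paper computes $E(f_1g_1)$ by the tower property and calls the independence ``easy to check.''
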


The number of free parameters in the $L$-de\-com\-po\-sable exponential
family, which we denote by $b_n$, is the dimension of $F$ minus
one. From the orthogonal decomposition \eqref{fspan} of $F$ it is
immediate that
\begin{equation} b_n = \dimz (F)-1 = \sum_{k=2}^n \dimz (F^k) = \sum_{k=2}^n
 \binom{n}{k}(k-1) = 2^n(n/2 -1)+1,\end{equation}
where the dimension of $F^k$ is easy to calculate.

The decomposition \eqref{fspan} simplifies the calculations regarding
the dimension of the bi-de\-com\-po\-sable model as well. Interchanging the
role of rows and columns, we see that the $L'$-de\-com\-po\-sable loglinear
family is
\begin{equation} 
\mathcal{P}_{L'}^+ = \mathcal{L}( \Psi \times\Phi_k: 1\leq k\leq n). \end{equation}
Define the \emph{$(k,\ell)$th bold cross-section} as
\begin{equation} \label{vasker}
\mathcal{H}_{k\ell}= \Phi_k \times \Phi_{\ell},\end{equation}
where the component (row and column) partitions are bold sections
defined in \eqref{bold}. By Corollary \ref{loglin metszet}, the
bi-de\-com\-po\-sable log-linear model is given by
\begin{equation} \label{pbplusz}
\mathcal{P}_b^+ = \mathcal{L}( \mathcal{H}_{k\ell}: 1\leq k,\ell\leq
n). \end{equation}

In order to find the dimension of $H$, define the subspaces $V^{\ell}, \widetilde{V}^{\ell},
G^{\ell}$ in the $L'$-de\-com\-po\-sable model just as we defined $U^k, \widetilde{U}^k, F^k$
in the $L$-de\-com\-po\-sable model. Using the notation introduced in
\eqref{logalter}, 
\begin{equation} 
U^{\Psi \times \Phi_{\ell}} = V^{\ell},\;  U^{\Psi \times \widetilde{\Phi}_{\ell}}= \widetilde{V}^{\ell}.
\end{equation}

Applying Lemma \ref{felbontas}, the subspace corresponding
to the $L'$-de\-com\-po\-sable model can be written as
\begin{equation}\label{gspan} G = \oplus_{\ell =2}^n  G^{\ell} \oplus \Span(\egy ). \end{equation}

By Lemma \ref{fo1}, for any pair
\[ U \in\{U^k, \widetilde{U}^k:\ 2\leq k \leq n\}, \ V \in\{V^{\ell},\widetilde{V}^{\ell}: \
2\leq \ell \leq n\},\]
$U \mme V$, since these subspaces correspond to product
partitions, where in the $U$-partitions, the column-partition is as
fine as possible, and in the $V$-partitions, the row-partition is as
fine as possible. By Lemma \ref{alg2}, we get
$F^k \mme G^{\ell}$ for all $k,\ell$. By Lemma \ref{alg1},
the space $H=F\cap G$ corresponding to bi-de\-com\-po\-sable distributions has the
orthogonal decomposition
\begin{equation}\label{hspan} H =  \oplus_{2\leq k,\ell\leq n} (F^k
 \cap G^{\ell}) \oplus \egy.\end{equation}

It remains to find the dimension and a basis of $F^k \cap
G^{\ell}$. For the time being, fix $k$ and $\ell$.
By Lemma \ref{fo1}, the subspace corresponding to the $(k,\ell)$th bold cross-section $\mathcal{H}_{k\ell}$ is just $U^k
\cap V^{\ell}$.
Observe that $F^k \cap G^{\ell}$ consists of exactly those vectors of
the space $U^k \cap V^{\ell}$, which are orthogonal to both $\widetilde{U}^k$ and $\widetilde{V}^{\ell}$.
Recall that the $\pi$th coordinate of a vector in $U^k \cap V^{\ell}$ depends only on
its marginal $|\pi_{\mathcal{H}_{k\ell}}| = (t_{ij})_{1\leq i,j\leq 3} $ defined by \eqref{arnyek}, that is the number of rooks
$\pi$ places in the nine parts into which the $(k,\ell)$th bold cross-section
divides the chessboard.

As the nine elements of
the matrix $|\pi_{\mathcal{H}_{k\ell}}|$ must satisfy row-sum and column-sum
constraints, the vector is determined by its coordinates $t_{ij}$ for
$i,j=1,2$. Furthermore, all of $t_{12},t_{21}, t_{22}$ can be
either zero or one. We will specify the marginal $|\pi_{\mathcal{H}_{k\ell}}|$ by
two coordinates: $a= t_{11}+t_{12}+t_{21}+t_{22}$ and $q$, where $q$ codes the placement
of the rooks in the middle row and column of the $3\times 3$
partition. Our coding is as follows. In the $k$th row and $\ell$th
column, there is either one rook in the intersection of the row and
the column (code $5$), or there are two rooks, one on a horizontal, and one on
a vertical arm of the cross. In this latter case, the two occupied
arms point towards a plane-quarter, and we use the usual numbering
of the plane-quarters as coding (one or two
arms of the cross may be missing, but this does not cause any
problems). That is, for fixed $k,\ell$,
\begin{equation}  \label{TGbasis1}
a^{k\ell}(\pi) = \mid \{ i: \ 1\leq i \leq k,\; 1\leq \pi(i) \leq \ell
\} \mid,
\end{equation}
and \begin{equation} \label{TGbasis2}
q^{k\ell}(\pi) = \left\{ \begin{array}{llll}
1 & \mbox{if} & \pi(k)> \ell, &  \pi^{-1}(\ell)< k \\
2 & \mbox{if} & \pi(k)< \ell, &  \pi^{-1}(\ell)< k \\
3 & \mbox{if} & \pi(k)< \ell, &  \pi^{-1}(\ell)> k \\
4 & \mbox{if} & \pi(k)> \ell, &  \pi^{-1}(\ell)> k \\
5 & \mbox{if} & \pi(k)= \ell. & \\
\end{array} \right.
\end{equation}
Since the $\pi$th coordinate of a vector in $U^k \cap V^{\ell}$ depends only on
its marginal $|\pi_{\mathcal{H}_{k\ell}}|$, a basis of $U^k \cap
V^{\ell}$ is given by the indicator vectors of all the possible values
of this marginal. Therefore, for each $a,q$, we define this indicator vector
\begin{equation} \label{rhodef} \rho^{k\ell}_{aq} (\pi) = \chi \{ a^{k\ell}(\pi) = a,\; q^{k\ell}(\pi) = q \}.\end{equation}
Of course, for many pairs $a,q$, these are zero vectors. We now
determine all cases when $\rho^{k\ell}_{aq}$ is not identically
zero. First, we need
\begin{equation}\label{alegal} \max(0, k+\ell-n) \leq a \leq \min(k,\ell), \end{equation}
as there must be a non-negative number of rooks in each rectangle of
the board. $q$ can usually be anything from $1$ to $5$, except
\begin{equation} \label{tlegal}
\begin{array}{lcl}
a=0& \Rightarrow &q=4,\\
a=1& \Rightarrow &q\in \{1,3,4,5\},\\
a=k<j& \Rightarrow &q\in \{2,3,5\},\\
a=j<k& \Rightarrow &q\in \{1,2,5\},\\
a=j=k& \Rightarrow &q\in \{2,5\}.
\end{array}
\end{equation}
We call the pairs $a,q$ satisfying \eqref{alegal} and \eqref{tlegal}
non-trivial pairs. After all this preparation, we are ready for the
proof of Theorem \ref{metszet}.

\begin{proof}[Proof of Theorem \ref{metszet}]
We have to show \eqref{metszetdim}. The number of free parameters of
the bi-de\-com\-po\-sable log-linear model is $\dimz (H)-1$, since only those
vectors $v$ in $H$ are allowed for which $p=e^v$ is a probability
distribution. By \eqref{hspan}, we only need to determine the
dimension of each subspace $F^k\cap G^{\ell}$, which consists of the vectors of
$U^k \cap V^{\ell}$, which are orthogonal to both
$\widetilde{U}^k$ and $\widetilde{V}^{\ell}$.

Let $u = \sum_{a,q} c_{aq} \rho^{k\ell}_{aq}$ be an arbitrary vector in
$U^k \cap V^{\ell}$, we find when it is orthogonal to $\widetilde{U}^k$ and
$\widetilde{V}^{\ell}$. First take a vector
$v(\pi) = \chi ( \pi\uos{1}{k} = C )$ in the basis of $\widetilde{U}^k$,
and introduce the notation $\mid C\cap \uos{1}{ \ell} \mid =
a$.
With $h=(k-1)!(n-k)!$, the scalar product is calculated as
\[ (u,v) = \left\{ \begin{array}{lll}
c_{a1} (k-a)h + c_{a2} (a-1)h + c_{a5}h & \mbox{if} & \ell\in C \\
c_{a3} ah + c_{a4}(k-a)h & \mbox{if} & \ell\not\in C \\
\end{array} \right. \]

Similarly, if $v(\pi) = \chi( \pi^{-1}\uos{1}{ \ell} = D )$ is a
basis vector of $\widetilde{V}^{\ell}$, $\mid D\cap \uos{1}{ k } \mid =
a$, and $g=(\ell-1)!(n-\ell)!$, then
\[ (u,v) = \left\{ \begin{array}{lll}
c_{a3} (\ell-a)g + c_{a2} (a-1)g + c_{a5}g & \mbox{if} & k\in D \\
c_{a1} ag + c_{a4}(\ell-a)g & \mbox{if} & k\not\in D \\
\end{array} \right. \]

Thus $F^k \cap G^{\ell}$ consists of the linear combinations of those vectors
$\sum_{q=1}^5 c_{aq} \rho^{k\ell}_{aq}$ for which the
above four linear combinations of the coefficients $c_{aq}$ are
zero. Of the four constraints
on the coefficients, only three are linearly independent, so in most
cases there are two linearly independent solutions for the five
coefficients. The cases $a=0,1,\min(k,\ell)$ must be treated separately,
it is readily seen that in the case $a=0$, the only solution is
zero,  while in the cases $a=1, \min(k,\ell)$ there is one
non-zero solution. Let $\Delta^{k\ell}_a$ denote the number of linearly independent
solutions, that is $\Delta^{k\ell}_a$ is either zero, one or two.
The following vectors form an orthogonal basis of $F^k \cap G^{\ell}$
(with the exception that some vectors may be $\nulla$):
\begin{equation} \label{mubazis}
\begin{array}{ll}
\mu^{k\ell}_{a1} =& -\rho^{k\ell}_{a2} +
(a-1)\rho^{k\ell}_{a5}  \\
\mu^{k\ell}_{a2} =& -(\ell-a)a\rho^{k\ell}_{a1} + (k-a)(\ell-a)
\rho^{k\ell}_{a2} - (k-a)a\rho^{k\ell}_{a3} + \\
&+ a^2\rho^{k\ell}_{a4} +
(k-a)(\ell-a)\rho^{k\ell}_{a5} \\
\end{array}
\end{equation}

Finally, since
\[ \sum_{k,\ell} \dimz(F^k \cap G^{\ell}) = \sum_{i\geq 1} |\{ (k,\ell) : \dimz(F^k \cap G^{\ell}) \geq i \} |,\]
to finish the proof of the theorem, it suffices to show that for
$1\leq i \leq n$
\[ |\{ (k,\ell) : \dimz(F^k \cap G^{\ell}) \geq i \} | = (n-i)^2. \]

To this end, let us find those $k,\ell$, for which $\dimz(F^k \cap G^{\ell})\geq 2j+2$. This happens if
among the quantities $\Delta^{k\ell}_a$ there are either two $1$'s
and at least $j$ $2$'s, or one $1$ and at least $(j+1)$ $2$'s.

The first of these cases occurs when $\ell+k \leq n+1$ and $\min\{ k,\ell\}
\geq j+2$, while the second case occurs when $\ell+k \geq n+2$ and $\max\{ k,\ell\}
\leq n-j-1$.
But if $\ell+k \leq n+1$ and $k,\ell\geq j+2$, then $k,\ell
\leq n-j-1$ also holds. Similarly, if $\ell+k \geq n+2$ and $k,\ell
\leq n-j-1$, then at the same time $k,\ell\geq j+3>j+2$.
Therefore, $\dimz(F^k \cap G^{\ell})\geq 2j+2$ holds if and only if $j+2 \leq k,\ell \leq n-j-1$,
and there are $[n-(2j+2)]^2$ such pairs.

Let us find those $k,\ell$, for which $\dimz(F^k \cap G^{\ell})\geq 2j+1$. This happens if
among the quantities $\Delta^{k\ell}_a$ there are either two $1$'s
and at least $j$ $2$'s, or one $1$ and at least $j$ $2$'s.

The first of these cases occurs when $\ell+k \leq n+1$ and $\min\{ k,\ell\}
\geq j+2$, while the second case occurs when $\ell+k \geq n+2$ and $\max\{ k,\ell\}
\leq n-j$.
But if $\ell+k \leq n+1$ and $k,\ell\geq j+2$, then $k,\ell
\leq n-j-1<n-j$ also holds. Similarly, if $\ell+k \geq n+2$ and $k,\ell
\leq n-j$, then at the same time $k,\ell\geq j+2$.
Therefore, $\dimz(F^k \cap G^{\ell})\geq 2j+1$ holds if and only if $j+2 \leq k,\ell \leq n-j$,
and there are $[n-(2j+1)]^2$ such pairs.
\end{proof}

\begin{remark}
In \eqref{mubazis}, we found an orthogonal basis $\{ \egy,
\mu^{k\ell}_{ai} \}$ of the space $H$. This orthogonality is
convenient for finding the parameters corresponding to a bi-de\-com\-po\-sable distribution.
There exists a basis consisting of indicator vectors as well, as follows.
Denote for any $k,\ell,a$
$\nu^{k\ell}_a = \sum_{q=1}^5 \rho^{k\ell}_{aq}$,
where $\rho^{k\ell}_{aq}$ was defined in \eqref{rhodef}. That is,
$\nu^{k\ell}_a$ is the indicator vector of the event that there are exactly
$a$ rooks in the upper left $k\times \ell$ rectangle of the chessboard.
The following vectors, together with $\egy$, form a
 basis of $H$:
\begin{equation} \label{nubazis} \begin{array}{ll}
\nu^{k\ell}_a:  & 1\leq k,\ell \leq n-1,
\max(0, k+\ell-n) < a \leq \min(k,\ell), \\
\rho^{k\ell}_{a5}:  & 1\leq k,\ell \leq n-1,
\max(1, k+\ell-n) < a \leq \min(k,\ell). \end{array}\end{equation}
This statement can be proved by induction, we omit the somewhat
lengthy calculations.
\end{remark}

\begin{remark}\label{subset_r} Notice that the vectors $\nu^{k\ell}_a$, together with $\egy$ form the
basis of the subspace associated with those positive distributions,
called \emph{bi$_S$-de\-com\-po\-sable} distributions,
which belong to the intersection of the $L_S$- and $L'_S$-de\-com\-po\-sable
models. By Corollary \ref{loglin metszet}, this is again a log-linear model, with generating product partitions
\begin{equation}\label{vekker} \widetilde{\mathcal{H}}_{k\ell} = \widetilde{\Phi}_k \times \widetilde{\Phi}_{\ell},\end{equation}
which we call the \emph{thin $(k,\ell)$th cross-sections}, dividing the chessboard
into four rectangles. The component partitions were defined in
\eqref{thin}. Notice that in this case, with $k,\ell$ fixed, $\nu^{k\ell}_a$
($a$ takes on all its possible values) is an orthogonal basis of the
subspace corresponding to $\widetilde{\mathcal{H}}_{k\ell}$. These subspaces are
``almost linearly independent'' in the sence that the only linear
dependence is that they all contain the vector $\egy$.
From this it follows that the number of parameters in this model is
\begin{equation} e_n =\sum_{j=0}^{\lfloor (n-1)/2\rfloor }
  (n-2j-1)^2. \end{equation}
The vectors $\rho^{k\ell}_{a5}$ represent the ``difference'' between
the bi-de\-com\-po\-sable and the bi$_S$-de\-com\-po\-sable distributions.
\end{remark}

\begin{remark} We have calculated the number of free parameters in the
$L$-decompo\-sab\-le, bi-de\-com\-po\-sable, bi$_S$-de\-com\-po\-sable models. For the
 sake of completeness we mention that the
 number of free parameters in the remaining $L_S$-de\-com\-po\-sable
 model is given by
\begin{equation}
c_n = \sum_{i=1}^n \left[ \binom{n}{i} -1 \right] = 2^n - n -1.
\end{equation}
\end{remark}

\subsection{Maximum likelihood estimation}

As we have seen in the previous sections, the positive bi-de\-com\-po\-sable
distributions $\mathcal{P}_b^+$ on $S_n$ form an exponential family with $d_n$
parameters. 

Denote by $\Pi_1, \ldots, \Pi_m$ a sample taken from a positive
bi-de\-com\-po\-sable distribution, and let $r(\pi)$ stand for the relative frequency of the permutation $\pi$
in the sample. The maximum likelihood estimate of the
true distribution, or equivalently, of its parameters, does not appear
to have
an explicit form in general, the likelihood function has to be maximized by
numerical methods. The iterative proportional fitting procedure (IPFP), used
in the theory of log-linear models, is one option. This algorithm converges to the maximum
likelihood estimate, if it exists. We describe briefly the
implementation of this algorithm in our setting.

The generators of the bi-de\-com\-po\-sable log-linear model are the bold
 cross-sections
 $\mathcal{H}_{k\ell}$ in \eqref{vasker}, which define the
marginal functions $|\pi_{\mathcal{H}_{k\ell}}|$ given by \eqref{arnyek}. The maximum
likelihood estimate is a distribution $p^*\in \mathcal{P}_b^+$ such that the distributions
of the marginals $|\Pi_{\mathcal{H}_{k\ell}}|$  under $p^*$ are the same as
under the empirical distribution $r$. There is at most one such $p^*$
in $\mathcal{P}_b^+$. In some cases, the maximum likelihood estimate
does not exist, because no distribution in $\mathcal{P}_b^+$ gives the
same distribution of the marginals as the empirical
distribution (we say that the sample contains structural zeros). In
these cases, a suitable $p^*$ can only be found in the closure
$cl(\mathcal{P}_b^+)$.
Numerical studies indicate that $cl(\mathcal P^+_b) = \mathcal P_b$.

The IPFP algorithm proceeds by cyclically
fitting the distributions of the individual marginals $|\Pi_{\mathcal{H}_{k\ell}}|$
to that observed in the sample. It converges to the unique element in
$cl(\mathcal{P}_b^+)$ which agrees with the empirical distribution in
all marginals. Starting from an
arbitrary $p^1 \in \mathcal{P}_b^+$ (say the uniform distribution),
the $n$th iteration step calculates
\begin{equation}
\label{iter}
p^{n+1}(\pi) =
\frac{\sum_{\sigma: |\sigma_{\mathcal{H}_{k\ell}}| =|\pi_{\mathcal{H}_{k\ell}}|}
 r(\sigma)}{\sum_{\sigma: |\sigma_{\mathcal{H}_{k\ell}}| =|\pi_{\mathcal{H}_{k\ell}}|}
 p^{n}(\sigma)} p^n(\pi),\end{equation}
where the pair $(k,\ell)$ runs cyclically over all possible values.

\begin{remark} By Remark \ref{subset_r}, maximum likelihood estimation in the
 bi$_S$-de\-com\-po\-sable model proceeds in an analogous way, namely  by running the IPFP algorithm
 with $|\pi_{\widetilde{\mathcal{H}}_{k\ell}}|$ of
 \eqref{vekker} instead of $|\pi_{\mathcal{H}_{k\ell}}|$, i.e. we use
 the thin cross-sections instead of the bold ones.
\end{remark}

\begin{remark}
In $\mathcal{P}_L$ and in $\mathcal{P}_{L'}$,
the maximum likelihood estimate can be given explicitly. For example,
the $L$-de\-com\-po\-sable model is parametrized by the conditional
probabilities \eqref{s3}. The maximum likelihood estimate of these
conditional probabilities is given by the corresponding conditional
probabilities under the empirical distribution.

Numerical studies indicate that the maximum likelihood estimate in the
family $cl(\mathcal{P}_b^+)$ can also be obtained by iteratively
calculating the maximum likelihood
projections on the component spaces $\mathcal{P}_L$ and $\mathcal{P}_{L'}$. 
\end{remark}

\section{Examples}
In this section, we collect some models from the literature, which are
de\-com\-po\-sable in at least one way. These models are submodels of the
``free'' de\-com\-po\-sable models, since they place specific constraints on
the parameters.

\begin{pelda}[Order statistics models] Consider an experiment in
which people are asked to rank sounds according to their
loudness, say in increasing order. One might suppose that the actual perception of each
stimulus is a random variable, whose relative ordering determines the
person's ordering of the sounds. This example was studied by
Thurstone \cite{Thurstone27}, and later by Daniels
\cite{Daniels50}. If the random variable associated to the $i$th sound
is $X_i$ with continuous distribution $F_i$, then the resulting
distribution on the orderings is
\[ p(\pi) = P(X_{\pi(1)} < \cdots < X_{\pi(n)}).\]
The assumption that $X_i$ are independent leads to the so called \emph{order statistics models}. If the
distributions form a location family $F_i(x) = F(x-\mu_i)$, the model
is called \emph{Thurstone model}. A well-studied case is when $F(x) =
1 - \exp(-\exp x)$ is the Gumbel distribution. This is called
\emph{Luce model}, and is equivalent to the model derived by Luce on
the basis on his ranking postulate and Choice Axiom in
\cite{Luce59}. This model is $L$-de\-com\-po\-sable, with
canonical decomposition
\[  \Lambda(x,C) = \frac{\theta_x}{\sum_{y\not\in C} \theta_y},\]
where $\theta_y$ are arbitrary positive parameters (with sum equal to $1$) associated with the
objects. However, the model is not $L'$-de\-com\-po\-sable
for arbitrary $\theta.$
\end{pelda}

\begin{pelda}[Paired comparisons models] A model suggested by
Babington Smith \cite{BS50} creates the ordering of the $n$ objects by
making every possible paired comparison independently of each other. The result of such a
tournament can be represented by a directed graph: if the graph
contains no directed circle, then it corresponds to a unique ordering
of the objects. Conditioning on the event that the graph is
circle-free, we get
\[ p(\pi) = c(\theta) \prod_{i<j} \theta_{\pi(i)\pi(j)},\]
where $\theta_{xy}$ is the probability that object $x$ is preferred to
object $y$ in a paired comparison. This model is $L$-de\-com\-po\-sable, with
\[\Lambda(x,C) = \prod_{y\in C} \theta_{yx}.\]
However, the model is not $L'$-de\-com\-po\-sable
for arbitrary parameters.
\end{pelda}

\begin{pelda}[Mallows-Bradley-Terry model] A special case of the
paired comparison model is given by $\theta_{xy}=\frac{\alpha_x}{\alpha_x+\alpha_y}$. This
form of the paired comparison probabilities was suggested by Bradley
and Terry \cite{BT52}, and Mallows \cite{Mallows57} suggested using
these probabilities in the paired comparison model. The resulting
distribution on the orderings is given by
\[ p(\pi) = c(\alpha)\prod_{i=1}^n \alpha_{\pi(i)}^{n-i} = c(\alpha)\prod_{j=1}^n
\alpha_j^{n-\pi^{-1}(j)},\]
which is bi-de\-com\-po\-sable.
\end{pelda}

\begin{pelda}[Multistage ranking model] This model, investigated by
Fligner and Verducci \cite{FV88} supposes the candidates are numbered from $1$ to
$n$. The ranking takes place stepwise. In the $k$th step, the best $k-1$ ranks are already
given out. The $k$th best candidate is then chosen from the
remaining ones, but only the relative order of the remaining
candidates is taken into account. In particular,
if the remaining candidates are $j_1 < \cdots < j_{n-k+1}$, then
choose $j_i$ with probability $\theta(i,k)$, where $\theta(i,k)$ are parameters
satisfying $\sum_{i=1}^{n-k+1} \theta(i,k)=1$. It is easily seen that this
model is $L$-de\-com\-po\-sable with
\[\Lambda(x,C) =
\theta(|\overline{C}\cap\uos{1}{x}|,|C|+1).\]
The model is also $L'$-de\-com\-po\-sable.
\end{pelda}

\begin{pelda}[Repeated insertion model] This model, studied by
Doignon, Peke\v{c} and Regenwetter \cite{DPR04} assumes that the
 ordering is created by considering the candidates one after the other
(according to their fixed numbering), and inserting the current candidate into
the order already formed by the previous ones. More specifically, for
each $k$, we have insertion probabilities $\theta(i,k)$, $i=1,\ldots, k$
with sum $1$. For the $k$th candidate, there are $k$ possible places
where he or she can be inserted into the order of the first $k-1$ candidates: insert
him or her between the $(i-1)$st and $i$th with probability $\theta(i,k)$.
This model is a ``dual'' of the
multistage ranking model, in the sense that it can be described
similarly to it, by interchanging ``ranks'' and ``candidates'' (but
not in the sense that the resulting permutations are each other's
inverse). It is $L$-de\-com\-po\-sable with
\[\Lambda(x,C) = \theta(|C\cap \uos{1}{x}|+1,x),\]
and it is also $L'$-de\-com\-po\-sable.
\end{pelda}

\begin{pelda}[Quasi-independence log-linear model]
Let $\theta_{ij}, \ 1\leq i,j\leq n$, be the elements of an
arbitrary doubly stochastic matrix, and
\[ p(\pi) = c(\theta)\prod_{i=1}^n \theta_{i\pi(i)} = c(\theta)\prod_{j=1}^n
\theta_{\pi^{-1}(j)j}.\]
This distribution is by the above equation bi-de\-com\-po\-sable. Writing it
in the log-linear form
\begin{equation}\label{m1} \log p(\pi) = \alpha^{(0)} + \alpha^{(1)}_{\pi(1)} +
\alpha^{(2)}_{\pi(2)}+\cdots + \alpha^{(n)}_{\pi(n)}, \end{equation}
it states the quasi-independence of the variables $\pi(i)$, $1\leq
i\leq n$.

It is easy tho see that
for $n>3$ the random permutations belonging to the quasi-independence model
have the following property. For any partition $\mathcal{Z}=(Z_1,Z_2)$
of size $2$, as in \eqref{particio}, with $|Z_1|=2$, the ordered
marginals $\Pi_{Z_1}$ and $\Pi_{Z_2}$ are conditionally independent,
given the unordered marginals $\{ \Pi_{\mathcal{Z}} \}$.
This property is a generalization of $L$-de\-com\-po\-sability,
and we can prove that only the quasi-independent distributions 
possess it. Moreover, for these distributions, a similar property is
satisfied with arbitrary partitions $\mathcal{Z}$.
\end{pelda}

\section{Invariance under relabellings}
As we have noted already, de\-com\-po\-sability of a random pairing
between sets $A$ and $B$ depends on how we label the elements of the
two sets. Suppose that a labelling on both sets is fixed, and the
random pairing function $\Pi: A\to B$ with these labellings becomes
$\Pi_{orig}: \uos{1}{n} \to \uos{1}{n}$.
Suppose that we relabel the set $A$ according to the permutation
$\sigma\in S_n$, that is object with original label $i$ receives the new
label $\sigma(i)$. Similarly, relabel the set $B$ according to
$\rho\in S_n$. Denote the random pairing function $\Pi: A\to B$ with
these new labellings $\Pi_{new}: \uos{1}{n} \to \uos{1}{n}$. If with the
original labelling, the pair of $i\in A$ is $\pi(i)\in B$, then with
the new labelling, the pair of $\sigma(i)\in A$ is $\rho\pi(i)\in
B$. Therefore, for the
distributions $p_{orig}$ and $p_{new}$,,
\begin{equation}\label{inv1} p_{orig}(\pi)= P(\Pi_{orig}= \pi) =
P(\Pi_{new} = \rho\pi\sigma^{-1} )  = p_{new}(\rho\pi\sigma^{-1}).
\end{equation}
In this section we investigate whether $L$-de\-com\-po\-sability is
preserved after such relabellings or not.

\begin{defi} Let  $\phi: S_n \to S_n$ be a one to one mapping. Then
 for any distribution $p$, define $p_{\phi}(\pi) = p(\phi(\pi))$. We say
 that the family $\mathcal{P}$ of distributions is
 \emph{invariant} under $\phi$, if
\[\mathcal{P}_{\phi} = \{ p_{\phi}: p\in\mathcal{P} \} \subset \mathcal{P}.\]
\end{defi}

Let us introduce some notation. For any $\sigma\in S_n$, let
$\phi_{\circ\sigma}: \pi \mapsto \pi\sigma$,
$\phi_{\sigma\circ}: \pi \mapsto \sigma\pi$
be the right and left multiplications by $\sigma$. Denote by $\sigma_{(12)}$ the permutation
which exchanges $1$ and $2$ only, and by $\sigma_r$ the reversing
permutation which maps $k$ to $n+1-k$.

In the ranking situation $(ii)$ described in the Introduction, 
a model for a random ranking is called \emph{label-invariant}, if it is
invariant under relabellings of the objects. It is called
\emph{reversible}, if it is invariant under reversing of the
ranks. The concepts of label-invariance and reversibility were studied for some wide classes
of ranking models in \cite{CFV}.

\begin{tet} \label{atszamoz} $\mathcal{P}_L$ is invariant under left multiplications, and under the group of right
 multiplications generated by $\sigma_r$ and $\sigma_{(12)}$ (for
 $n\geq 4$, this group contains eight right
 multiplications, including the identity). The family is not
 invariant under any other right multiplication.
\end{tet}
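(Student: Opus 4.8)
The plan is to treat the two multiplication sides separately and, on the right, to recast $L$-de\-com\-po\-sability of the transformed distribution as $L$-de\-com\-po\-sability with respect to a new \emph{arrival order}. Using the product form \eqref{s1.0}, one checks directly that $p_{\phi_{\sigma\circ}}(\pi)=p(\sigma\pi)$ only relabels the ranks inside $\Lambda$ (replace $\Lambda(x,C)$ by $\Lambda(\sigma x,\sigma C)$), so \emph{every} left multiplication preserves $L$-de\-com\-po\-sability; this is the easy half. For a right multiplication, $p_{\phi_{\circ\sigma}}(\pi)=p(\pi\sigma)=c\prod_k\Lambda(\pi(\sigma(k+1)),\pi(\sigma\uos{1}{k}))$, which is exactly an $L$-decomposition in which the objects are revealed in the order $\sigma(1),\ldots,\sigma(n)$; call the resulting family $\mathcal P_L^{\sigma}$, so that $(\mathcal P_L)_{\phi_{\circ\sigma}}=\mathcal P_L^{\sigma}$. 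Since $(p_{\phi})_{\psi}=p_{\phi\circ\psi}$, the set $\Gamma$ of right multiplications leaving $\mathcal P_L$ invariant is a subgroup of $S_n$, and applying the inverse element shows that $\sigma\in\Gamma$ actually gives $\mathcal P_L^{\sigma}=\mathcal P_L$.

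The core is a subset-indexed description of these families. Generalising Definition \ref{s1}(3) and Proposition \ref{lfeltfugg} from the standard flag to the flag $S_j=\sigma\uos{1}{j}$, membership in $\mathcal P_L^{\sigma}$ is equivalent to the random sets $\Pi(S_j)$ forming a Markov chain, and by the past/present/future description of Markov chains this is equivalent to the family of statements $\mathrm{CI}(S_j)$, where for $A\subseteq\uos{1}{n}$ I write $\mathrm{CI}(A)$ for ``$\Pi|_{A}$ and $\Pi|_{A^{c}}$ are conditionally independent given the set $\Pi(A)$''. Two observations make this list short: $\mathrm{CI}(A)=\mathrm{CI}(A^{c})$, and $\mathrm{CI}(A)$ is vacuous whenever $|A|\le1$ (conditioning on $\Pi(A)$ then determines $\Pi|_{A}$). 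Hence $\mathcal P_L^{\sigma}$ depends only on the \emph{middle levels} $S_2,\ldots,S_{n-2}$, each up to complementation. Sufficiency of the generators now drops out: right multiplication by $\sigma_{(12)}$ or by the transposition of $n-1,n$ changes only $S_1$ or $S_{n-1}$, and $\sigma_r$ sends $S_j$ to $\uos{n-j+1}{n}$, whose complement is $\uos{1}{n-j}$, so in all three cases the collection $\{\mathrm{CI}(S_j):2\le j\le n-2\}$ is unchanged. With the subgroup property this gives $\Gamma\supseteq\langle\sigma_r,\sigma_{(12)}\rangle$; and since $\sigma_r$ conjugates $(12)$ to $(n-1,n)$, which are disjoint for $n\ge4$, this group is the dihedral group of order $8$.

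For the converse I would show $\Gamma\subseteq\langle\sigma_r,\sigma_{(12)}\rangle$ by identifying all flags whose middle levels reproduce the standard collection $\{\uos{1}{i}:2\le i\le n-2\}$ modulo complementation. Given $\sigma\in\Gamma$, each middle level $S_j$ must, up to complement and with $|S_j|=j$, equal some $\uos{1}{i}$, hence $S_j\in\{\uos{1}{j},\uos{n-j+1}{n}\}$. The nesting $S_2\subset\cdots\subset S_{n-2}$ forces a single type throughout, since for $2\le j\le n-2$ and $n\ge4$ a prefix cannot sit inside a short suffix; this leaves exactly the standard middle or its reversal. Then $S_1$ and $S_{n-1}$ each admit precisely the two values produced by $\sigma_{(12)}$ and by the transposition of $n-1,n$. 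Counting yields $2\cdot2\cdot2=8$ flags, matching $|\langle\sigma_r,\sigma_{(12)}\rangle|$, so $\Gamma$ is exactly this group.

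The step I expect to be the real obstacle is the \emph{non-redundancy} input underlying the previous paragraph: that equality of the two families forces equality of the two $\mathrm{CI}$-collections. Concretely, for any $A$ with $2\le|A|\le n-2$ that is neither a prefix $\uos{1}{i}$ nor a suffix, I must exhibit a genuine member of $\mathcal P_L$ violating $\mathrm{CI}(A)$, thereby showing that $\mathrm{CI}(A)$ is not implied by the standard statements $\mathrm{CI}(\uos{1}{i})$. Such an $A$ straddles the order (there are $a<b$ with $a\notin A$, $b\in A$, and symmetrically on the other side), and the plan is to build $p$ through an explicit choice of the factors $\Lambda$ in \eqref{s1.0} that creates a strong dependence between two straddling objects, localised to a few coordinates while the remaining factors are taken symmetric so that $p$ lies in $\mathcal P_L$; one then verifies that conditioning on the set $\Pi(A)$ fails to decouple $\Pi|_{A}$ from $\Pi|_{A^{c}}$. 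Making this construction work uniformly over all such $A$ is the delicate part; once it is in place, the rest is bookkeeping with the subset description above.
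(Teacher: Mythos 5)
Your treatment of left multiplications and of invariance under the eight right multiplications is correct, and your reformulation of $(\mathcal P_L)_{\phi_{\circ\sigma}}$ as $L$-de\-com\-po\-sability along the flag $S_j=\sigma\{1:j\}$, characterised by the conditional-independence statements $\mathrm{CI}(S_j)$ at the middle levels $2\le j\le n-2$ up to complementation, is a clean equivalent of what the paper gets from Properties 1 and 3 of Definition \ref{s1} (Markov property plus reversibility). The counting of flags whose middle levels reproduce the standard collection, and the identification of the resulting group of order eight, are also fine.

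The converse, however --- that no other right multiplication preserves $\mathcal P_L$ --- is not proved. Your entire last paragraph reduces to the assertion that for every $A$ with $2\le|A|\le n-2$ which is neither an initial nor a terminal segment there exists an $L$-de\-com\-po\-sable distribution violating $\mathrm{CI}(A)$, and you explicitly leave this as a plan (``build $p$ through an explicit choice of the factors $\Lambda$ \dots the delicate part'') rather than a construction. This is precisely where the paper's proof does its real work: it derives from \eqref{s1.0} the crossover ratio identity \eqref{inv2} as a necessary condition for $L$-de\-com\-po\-sability, takes the explicit positive bi-de\-com\-po\-sable distribution $p=c(d^*)\exp\{\rho^{d^*d^*}_{d^*5}\}$ built from the indicator \eqref{rhodef}, and exhibits four permutations $\pi_{11},\pi_{12},\pi_{21},\pi_{22}$ for which $p_{\circ\sigma}$ violates \eqref{inv2}; the small combinatorial lemma producing elements $c,e\in\{1:a\}$ and $d,f\notin\{1:a\}$ with the separation property is what makes the witness work uniformly over all $\sigma$ outside the group (equivalently, over all your sets $A$). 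Until you supply such a witness, your argument establishes only that the group generated by $\sigma_r$ and $\sigma_{(12)}$ is contained in the invariance group, not that it equals it.
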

\begin{proof}
Invariance under left-multiplications follows e.g. from Property 3 in
Definition \ref{s1}, as well as invariance under right multiplication
by $\sigma_r$, since  the Markov property is reversible. Invariance
under right multiplication by $\sigma_{(12)}$ can be checked directly
using Property 1 in Definition 1.

To show that the family is not invariant under other right
multiplications, we prove that for all
other permutations $\sigma$ there exists a positive bi-de\-com\-po\-sable distribution $p$, such that
$p_{\circ\sigma}$ is not $L$-de\-com\-po\-sable. The group of
right multiplications generated by $\phi_{\circ\sigma_r},
\phi_{\circ\sigma_{(12)}}$ are the multiplications by the following
permutations:
\begin{equation}\label{nyolc} id,\,\, \sigma_r,\,\, \sigma_{(12)},\,\, \sigma_r   \sigma_{(12)},\,\, \sigma_r
 \sigma_{(12)} \sigma_r,\,\, \sigma_{(12)} \sigma_r,\,\,
\sigma_r   \sigma_{(12)}  \sigma_r   \sigma_{(12)},\,\,
\sigma_{(12)}  \sigma_r   \sigma_{(12)}
\end{equation}
We will use the following property: let
$p$ be $L$-de\-com\-po\-sable. Suppose that the probability of $\pi_{11}$ and
$\pi_{22}$ is positive, and
$a$ is such that $\pi_{11}\uos{1}{a} =
\pi_{22}\uos{1}{a}$. Define the ``crossover'' permutations:
\[ \begin{array}{rclrcl}
\pi_{12}(k) &=& \left\{ \begin{array}{ccc}
\pi_{11}(k) & \mbox{if} & k\leq a \\
\pi_{22}(k) & \mbox{if} & k> a\\ \end{array} \right., &
\pi_{21}(k) &=& \left\{ \begin{array}{ccc}
\pi_{22}(k) & \mbox{if} & k\leq a \\
\pi_{11}(k) & \mbox{if} & k> a\\ \end{array} \right.. \\
\end{array} \]
Then
\begin{equation} \label{inv2} \frac{p(\pi_{11})}{p(\pi_{12})} =
\frac{p(\pi_{21})}{p(\pi_{22})}. \end{equation}

If $\sigma$ not a member of the permutations in \eqref{nyolc}, then neither
is its inverse, and there exists an $2\leq a \leq n-2$, such that
\[  \sigma^{-1}\uos{1}{a} \neq \uos{1}{a},  \uos{n-a+1}{n}.
\]
Let $a$ be such a number. Therefore there exist $c,e
\in \uos{1}{a}$ and $d,f \not\in \uos{1}{a}$, for which
\[ c^* = \sigma^{-1}(c) > \sigma^{-1}(d) = d^*, \quad
e^* = \sigma^{-1}(e) < \sigma^{-1}(f) = f^*. \]
For the numbers $\alpha, \beta, \gamma$ we say that $\alpha$ separates
$\beta$ and $\gamma$, if
$\beta<\alpha<\gamma$ or $\beta>\alpha>\gamma$. Now, if $d^*\geq f^*$,
then $d^*$ (and $f^*$ as well) separates $c^*$ and $e^*$. If
$d^*<f^*$, then either one of them separates $c^*$ and
$e^*$, or $c^*$ (and $e^*$ as well) separates $d^*$ and
$f^*$. Therefore, one of the following two cases holds:
\[ \begin{array}{ccrrc}
1. & \exists &c,e \in \uos{1}{a}, &d \not\in \uos{1}{a}:&
  d^*\mbox{ separates } c^*,e^*\\
2. & \exists &c \in \uos{1}{a}, &d,f \not\in \uos{1}{a}:&
  c^*\mbox{ separates } d^*,f^* \\
\end{array} \]

The two cases can be treated in the same way. Let us deal with the
first one! Let $f \not\in \uos{1}{a}, f\neq d$ be arbitrary,
with $f^* = \sigma^{-1}(f)$. Recall \eqref{rhodef}, and let $p = c(d^*) \exp\{ \rho^{d^*d^*}_{d^*5} \}$,
this is a positive bi-de\-com\-po\-sable distribution.
Let $\pi_{11} = \sigma^{-1}$, from which we obtain $\pi_{22}$ by
exchanging two pairs:
\[ \pi_{22}(c)=e^*, \pi_{22}(e)=c^*,
\pi_{22}(d)=f^*, \pi_{22}(f)=d^*. \]
Denote by $\pi_{12}$ and $\pi_{21}$ the crossover permutations.
For these four permutations,
$p_{\circ\sigma}$ does not satisfy \eqref{inv2}.
On the one hand, multiplying $\pi_{11}$ by $\sigma$ from the right, we
get the identity permutation, for which $\rho^{d^*d^*}_{d^*5}=1$. On the
other hand, for both $\pi_{12}\sigma$ and
$\pi_{21}\sigma$, $\rho^{d^*d^*}_{d^*5}=0$ , since for the first,
$d^*$ is not a fixed point, and for the second, there is an element
greater than $d^*$ among the first $d^*$ elements. This completes the
proof.
\end{proof}

\section{Discussion and application}
In this paper, we introduced log-linear models for random
permutations, whose generators are product partitions of the chessboard. Examples are the
$L$-de\-com\-po\-sable, $L_S$-de\-com\-po\-sable, bi-de\-com\-po\-sable, and
bi$_S$-de\-com\-po\-sable models.
In all of these cases, we determined the number of
parameters. We showed how to calculate
the  maximum likelihood estimate of the continuous parameters either
directly (for the $L$-de\-com\-po\-sable model) or by the iterative
proportional fitting algorithm (in the other models). The natural
order(s) implied by the models on the set(s) is either known,
or it also has to be estimated. We studied the extent to which this order can be determined in the
$L$-de\-com\-po\-sable and the bi-de\-com\-po\-sable models. There are many other statistical
questions of interest, which we did not address in this paper.
Another theoretically, and perhaps also practically important question
is the characterization of de\-com\-po\-sable distributions, if we do not
restrict ourselves to the strictly positive case.  

Finally, we fit our models to one of the most investigated ranking data in the literature, the
1980 election of the American Psychological Association (APA). This
organization elects a president each year by asking
its members to rank five candidates. In 1980,
5738 complete rankings were cast. APA chooses the winner by the
Hare system.
See Fishburn \cite{Fishburn} for a review of the advantages
and disadvantages of this system.

Analyses of
these data can be found, among others, in Chung and Marden
\cite{ChMar93}, Diaconis \cite{Diaconis89}, McCullagh \cite{McC93},
and Stern \cite{Stern93}. One characteristic feature of the data is
that the members of the association can be divided into three distinct groups:
the research psychologists (candidates $1$ and $3$ belong here),
clinical psychologists (their candidates are $4$ and $5$), and the
community psychologists, to whom $2$ belongs. The first two groups
represent the majority of the members. Not surprisingly, analysis
shows that each group tends to prefer its own candidates.

Chung and Marden \cite{ChMar93} fit orthogonal contrast models to the data.
Diaconis \cite{Diaconis89} uses this dataset to illustrate the method
of spectral analysis of ranked data, with many pointers to
literature. McCullagh \cite{McC93} fits log-linear models based on
inversions to the data. We emphasize here that we do not attempt to
provide a thorough analysis of the APA data, our aim is merely to
illustrate the fit of our models on a real dataset. We used the
ordering data, Table \ref{tablazat1}
shows the maximum of the log-likelihood function ($L$) and the chi-square value of the
goodness of fit, with the degrees of freedom in parentheses for all
models. In the last column, we gave the standardized statistic $U =
(GOF - df)/\sqrt{df}$. In all cases, where applicable, the results
appearing in Table \ref{tablazat1} correspond to the best right/left
relabelling of the original data, which we found by
an exhaustive search. As expected, the order on the ranks indicated by
the models coincides with the natural order from best to worst. All
models agree  on the natural order of the candidates as well, $4$ is
in the middle, with $\{1,3\}$ and $\{2,5\}$ on its two sides (notice
that there are eight permutations fitting this pattern, as stated in
Theorem \ref{atszamoz}). The best fit is provided by the
$L$-de\-com\-po\-sable model. The results indicate that the rankings violate
de\-com\-po\-sability (i.e. conditional independence relations) more than
the orderings. There are at least two ways to find better fitting
models. Firstly, we could check which conditional independences do not
hold. Models which assume decomposability at only some row-sections
and some column-sections also fit into the log-linear setting
described in this paper. While a general expression for the number of
free parameters in these wider models is probably intractable, it can
be calculated numerically in any particular case. Secondly, one could
try to reduce the number of parameters by selecting a log-linear
model with fewer generating partitions.   

\begin{table}[htb] 
\centering
\caption{Fit of log-linear models to APA ordering data}\label{tablazat1}
\begin{tabular}{lccc}
\hline
Model &  $L$ & GOF $(df)$ & $U$\\
\hline
saturated &  $-26612$ &-\\
$L$-de\-com\-po\-sable & $-26661$ & $98.9\, (70)$& 3.45\\
$L'$-de\-com\-po\-sable & $-26674$ & $126.5\, (70)$& 6.75\\
$L_S$-de\-com\-po\-sable & $-26684$  & $144.8\, (93)$&5.37\\
$L'_S$-de\-com\-po\-sable  & $-26697$ & $171.7\, (93)$&8.16\\
bi-de\-com\-po\-sable&  $-26687$ & $151.8 \,(89)$&6.66\\
bi$_S$-de\-com\-po\-sable& $-26701$ & $180.1\, (99)$&8.15\\
uniform &  $-27470$ & $2183.0\, (119)$& \\
\hline
\end{tabular}
\end{table}

\section{Proofs} \label{appendix}
\begin{proof}[Proof of Proposition \ref{teglafugg}] If the stated
  conditional independences hold, the random permutation is clearly
  bi-de\-com\-po\-sable. For the other direction, suppose $\Pi$ is
  bi-de\-com\-po\-sable. By $L$-de\-com\-po\-sability, given $ \{\Pi_{\sopp} \}$,
$\Pi_{\sopp_i}$ are conditionally independent. Conditioning on $\{
\Pi^{-1}_{\underline{\lambda}} \}$ as well does not ruin this
  independence, since the additional condition restricts the values of
  the $\Pi_{\sopp_i}$ one by one. Thus we proved that $\Pi_{\sopp_i}$
  are conditionally independent, given $ \{\Pi_{\sopp} \}$ and $\{
\Pi^{-1}_{\underline{\lambda}} \}$, and using $L'$-de\-com\-po\-sability,
  the same is true for $\Pi^{-1}_{\underline{\lambda}_j}$. Denote a
  condition by $E=\{ \{\Pi_{\sopp} \} = u, \{
\Pi^{-1}_{\underline{\lambda}} \}=v\} $, then
\[ P(\Pi=\pi|E) = \prod_{i=1}^s P(\Pi_{\sopp_i}=\pi_{\sopp_i}|E),\]
and since
  \[ \Pi_{\sopp_i} = ( \Pi_{\sopp_i \times \underline{\lambda}_j}: \,
1\leq j\leq t),\]
where $\Pi_{\sopp_i \times \underline{\lambda}_j}$ is a function of
$\Pi^{-1}_{\underline{\lambda}_j}$, also 
\[ P(\Pi_{\sopp_i}=\pi_{\sopp_i}|E) = \prod_{j=1}^t P(\Pi_{\sopp_i
  \times \underline{\lambda}_j}
= \pi_{\sopp_i \times \underline{\lambda}_j}|E) ,\]
which proves the lemma.
\end{proof} 

\begin{proof}[Proof of Lemma \ref{felt fugg}]
Since $L_2({\mathcal{D}_1}\cap{\mathcal{D}_2})=L_2({\mathcal{D}_1})\cap
L_2({\mathcal{D}_2})$, the spaces $L_2({\mathcal{D}_1})$ 
\'es $L_2({\mathcal{D}_2})$ intersect orthogonally if and only if for
any $f\in L_2({\mathcal{D}_1})$, $g\in L_2({\mathcal{D}_2})$,
$$E\left(\bigl[f-E(f\mid{\mathcal{D}_1}\cap{\mathcal{D}_2})\bigr]
\bigl[g-E(g\mid{\mathcal{D}_1}\cap{\mathcal{D}_2})\bigr]\right)=0.$$ 

If the conditional independence relation holds, then the following stronger
equality holds:
$$E\left(\bigl[f-E(f\mid{\mathcal{D}_1}\cap{\mathcal{D}_2})\bigr] 
\bigl[g-E(g\mid{\mathcal{D}_1}\cap{\mathcal{D}_2})\bigr]
\mid{\mathcal{D}_1}\cap{\mathcal{D}_2}\right)=0.$$

In the other direction, if the spaces intersect orthogonally, then let
$E_1\in{\mathcal{D}_1}$, $E_2\in{\mathcal{D}_2}$, and denote by $C$ the
event that
$$P(E_1\cap
E_2\mid{\mathcal{D}_1}\cap{\mathcal{D}_2})-
P(E_1\mid{\mathcal{D}_1}\cap{\mathcal{D}_2})P(E_2\mid{\mathcal{D}_1}
\cap{\mathcal{D}_2})>0.$$
With $f=\chi(E_1)\chi(C)$ and $g=\chi(E_2)\chi(C)$, 
\begin{multline*}
E\left(\bigl[f-E(f\mid{\mathcal{D}_1}\cap{\mathcal{D}_2})\bigr]
\bigl[g-E(g\mid{\mathcal{D}_1}\cap{\mathcal{D}_2})\bigr]\right)=\\
E\left(E(fg\mid{\mathcal{D}_1}\cap{\mathcal{D}_2})-E(f\mid{\mathcal{D}_1}\cap{\mathcal{D}_2}) 
E(g\mid{\mathcal{D}_1}\cap{\mathcal{D}_2})\right)=\\
=E\left(\chi(C)\bigl[P(E_1\cap E_2\mid{\mathcal{D}_1}\cap{\mathcal{D}_2})-P(E_1\mid{\mathcal{D}_1}\cap{\mathcal{D}_2}) 
P(E_2\mid{\mathcal{D}_1}\cap{\mathcal{D}_2})\bigr]\right)=0.
\end{multline*}
This is possible only if $P(E_1\cap
E_2\mid{\mathcal{D}_1}\cap{\mathcal{D}_2})-
P(E_1\mid{\mathcal{D}_1}\cap{\mathcal{D}_2})P(E_2\mid{\mathcal{D}_1}
\cap{\mathcal{D}_2})\le 0$ with probability $1$. The reverse
inequality is obtained similarly, thus $E_1$ and $E_2$ are
conditionally independent, given ${\mathcal{D}_1}\cap{\mathcal{D}_2}$. 
\end{proof}

\begin{proof}[Proof of Lemma \ref{fo1}]
We apply Lemma \ref{felt fugg} to $S_n$ endowed
with the uniform distribution. In this case, orthogonal intersection
in the $L_2$-space is equivalent to orhtogonal intersection in $\mathbb{R}^{n!}$. 
The second statement in \eqref{lemma3} holds, because it is easy to
check that $\sigma(\mathcal R'\times \mathcal C)\cap \sigma(\mathcal
R\times \mathcal C') = \sigma(\mathcal R\times \mathcal C)$.
Concerning the first one, we have to prove that $|\Pi_{\mathcal
  R'\times \mathcal C}|$ and $|\Pi_{\mathcal R\times \mathcal C'}|$
are conditionally independent, given $|\Pi_{\mathcal R\times \mathcal
  C}|$, if $\Pi$ is a uniformly distributed random permutation, which
is again easy to check.
\end{proof}

\begin{proof}[Proof of Lemma \ref{alg1}]
By supposition, $Pr_{V_j}U_i \subset U_i$ for every $i,j$, therefore
$Pr_{V_j}U \subset U$ for every $j$, i.e. $U$ intersects each $V_j$
orthogonally. Consequently, $Pr_UV_j \subset V_j$ for every $j$,
which yields $Pr_UV \subset V$, which was to be proved. On the other
hand, let $W =\Span( U_i \cap V_j: i\in I, j\in J)$. Then
$Pr_{V_j}U_i\subset W$, furthermore $Pr_UV_j =
Pr_{V_j}U\subset W$, which leads to $Pr_UV \subset W$.
\end{proof}

\begin{proof}[Proof of Lemma \ref{alg2}]  We use that $U$ and $V$ intersect orthogonally if
and only if the projection operators onto them commute, that is $Pr_U
Pr_V = Pr_V Pr_U$. Now
\begin{multline*} Pr_{U_2} Pr_{V_2} = (Pr_{U} - Pr_{U_1})(Pr_{V} - Pr_{V_1}) = \\
Pr_{U}Pr_{V} - Pr_{U_1}Pr_{V} - Pr_{U}Pr_{V_1} + Pr_{U_1}Pr_{V_1}, \end{multline*}
and by supposition the operators in all four terms commute, yielding
$Pr_{V_2} Pr_{U_2}$. The second statement follows from Lemma
\ref{alg1}.
\end{proof}

\begin{proof}[Proof of Lemma \ref{felbontas}]
We use again that orthogonality in the $L_2$-space is equivalent to
orhtogonality in $\mathbb{R}^{n!}$. In this proof, we use the notation
\[ \sigma_k = \sigma( \Phi_k \times \Psi), \quad \widetilde{\sigma}_k
= \sigma( \widetilde{\Phi}_k \times \Psi).\]
An element of $F^k$ is a
difference $f_1 = f - E(f\mid \widetilde{\sigma}_k)$, where $f$ is
$\sigma_k$-measurable. Orthogonality to $\egy$ means that
$E(f_1)=0$. For the other statement, let $g_1$ be an element of $F^j$,
where $j>k$. It is easy to check that under the uniform distribution, $\sigma_k$ and $\sigma_j$ are
conditionally independent, given $\widetilde{\sigma}_k$. Therefore,
\[ E(f_1 g_1) = E[ E(f_1 g_1 \mid \widetilde{\sigma}_k)] = 
E[ E(f_1 \mid \widetilde{\sigma}_k)E(g_1 \mid \widetilde{\sigma}_k)] = 0,\]
since $E(f_1 \mid \widetilde{\sigma}_k)=0$ with probability $1$. 
\end{proof}

\vspace{\fill}

\end{document}